\DeclareMathOperator{\SCH}{SCH}
\DeclareMathOperator{\ZFC}{ZFC}
\DeclareMathOperator{\range}{range}
\DeclareMathOperator{\cf}{cf}
\def\k{\kappa}
\def\l{\lambda}
\newtheorem{theorem}{Theorem}[section]
\newtheorem{lemma}[theorem]{Lemma}
\newtheorem{definition}[theorem]{Definition}
\newtheorem{remark}[theorem]{Remark}
\newtheorem{claim}[theorem]{Claim}
\numberwithin{equation}{section}
\def\l{\lambda}
\def\rmark{\mbox{$\rm\bf\rule{0.06em}{1.45ex}\kern-0.05em R$}}
\def\pmark{\mbox{$\rm\bf\rule{0.06em}{1.45ex}\kern-0.05em P$}}
\def\nmark{\mbox{$\rm\bf\rule{0.06em}{1.45ex}\kern-0.05em N$}}
\def\vdash{\mbox{$\rm\| \kern-0.13em -$}}
\begin{document}

\title[On cuts in ultraproducts of linear orders II]{On cuts in ultraproducts of  linear orders II}

\author[ M. Golshani and S. Shelah]{ Mohammad Golshani and Saharon Shelah}

\thanks{The first author's research has been supported by a grant from IPM (No. 91030417). The second
author's research has been partially supported by the European Research Council grant 338821. This is
publication 1087 of second author.}
\thanks{The authors thank the referee of the paper for his useful comments and suggestions.}\maketitle

\begin{abstract}
We continue our study  of the class $\mathscr{C}(D)$, where $D$ is a uniform ultrafilter on a cardinal $\kappa$
and  $\mathscr{C}(D)$ is the class of all pairs $(\theta_1, \theta_2),$ where  $(\theta_1, \theta_2)$ is the cofinality of  a cut in $J^\k/D$ and $J$ is some  $(\theta_1+\theta_2)^+$-saturated dense linear order.
We give a combinatorial characterization   of the class $\mathscr{C}(D)$. We also show that if $(\theta_1, \theta_2) \in \mathscr{C}(D)$ and $D$ is $\aleph_1$-complete or $\theta_1 + \theta_2 > 2^\kappa,$ then $\theta_1=\theta_2.$
\end{abstract}
\maketitle

\section{Introduction}
Assume $\kappa$ is an infinite cardinal and $D$ is an ultrafilter on $\kappa$.  Recall that $\mathscr{C}(D)$ is defined to be the class of all pairs $(\theta_1, \theta_2),$ where  $(\theta_1, \theta_2)$ is the cofinality of  a cut in $J^\k/D$ and $J$ is some (equivalently any) $(\theta_1+\theta_2)^+$-saturated dense linear order. Also $\mathscr{C}_{> \lambda}(D)$ is defined to be the class of all pairs $(\theta_1, \theta_2) \in \mathscr{C}(D),$ such that $\theta_1 + \theta_2 > \lambda$. The classes $\mathscr{C}_{\geq \lambda}(D), \mathscr{C}_{< \lambda}(D)$ and $\mathscr{C}_{\leq \lambda}(D)$ are defined similarly.

The works \cite{malliaris-shelah1}, \cite{malliaris-shelah2} and \cite{malliaris-shelah3} of Malliaris and Shelah have started  the study of this class for the case $\theta_1 + \theta_2 \leq 2^\k$ and \cite{golshani-shelah} started the study of the case $\theta_1 + \theta_2 > 2^\k$.
As it was observed in \cite{golshani-shelah}, the study of the class $\mathscr{C}_{> 2^{\kappa}}(D)$
is very different from the case $\mathscr{C}_{\leq 2^{\kappa}}(D),$ and to prove results about it, usually some extra set theoretic assumptions are needed. In this paper we continue
 \cite{golshani-shelah} and prove more results related to  the class $\mathscr{C}(D)$.

In the first part of the paper (Sections 2 and 3) we give a combinatorial characterization of $\mathscr{C}(D)$. Using notions defined in section 2, we can state our first main theorem as follows.
\begin{theorem}
Assume
$D$ is an ultrafilter  on   $\kappa$ and $\lambda_1, \lambda_2 > \kappa$ are regular cardinals. The following are equivalent:

$(a)$ There is $\bar{a} \in \mathcal{S}_c$ which is not $c$-solvable, where $c= \langle \kappa, D, \lambda_1, \lambda_2 \rangle.$

$(b)$ $( \lambda_{1}, \lambda_{2}) \in \mathscr{C}(D)$.
\end{theorem}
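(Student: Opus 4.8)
The plan is to prove the two implications separately, using a back-and-forth argument to build a witness on one side from a witness on the other. The key technical objects are the sequences in $\mathcal{S}_c$ (which should encode, in a purely combinatorial fashion, a candidate for an unbounded-below/unbounded-above pair of sequences in an ultrapower) and the notion of $c$-solvability (which should say that such a candidate can be ``filled in'' to an actual cut of cofinality $(\lambda_1, \lambda_2)$, or else collapsed). So the heart of the matter is that a cut of cofinality $(\lambda_1, \lambda_2)$ in $J^\kappa/D$ corresponds exactly to a non-solvable $\bar a \in \mathcal{S}_c$.

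For $(b) \Rightarrow (a)$, I would start with a $(\lambda_1+\lambda_2)^+$-saturated dense linear order $J$ and a cut $(\langle f_\alpha : \alpha < \lambda_1 \rangle, \langle g_\beta : \beta < \lambda_2 \rangle)$ in $J^\kappa/D$ of cofinality exactly $(\lambda_1, \lambda_2)$. The idea is to extract from the $f_\alpha$'s and $g_\beta$'s a combinatorial trace on $\kappa$: for each pair of indices one records, modulo $D$, the set of coordinates where $f_\alpha(i) < f_{\alpha'}(i)$, where $g_\beta(i) < g_{\beta'}(i)$, and where $f_\alpha(i) < g_\beta(i)$, together with whatever finer data the definition of $\mathcal{S}_c$ in Section 2 demands (e.g. a coherent system of ``levels'' or choice of representatives). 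One then checks that this data lies in $\mathcal{S}_c$, and that if it were $c$-solvable the solution would produce either a cofinal map $\lambda_2 \to \lambda_1$ (or $\lambda_1 \to \lambda_2$) or an element of $J^\kappa/D$ filling the cut — both contradicting the hypothesis that $(\lambda_1, \lambda_2)$ is the true cofinality of a genuine cut. Hence $\bar a$ is not $c$-solvable.

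For $(a) \Rightarrow (b)$, given a non-$c$-solvable $\bar a \in \mathcal{S}_c$, I would realize $\bar a$ inside the ultrapower. Using the saturation of $J$ (specifically $(\lambda_1+\lambda_2)^+$-saturation, and $\lambda_1, \lambda_2 > \kappa$ so that there is enough room coordinatewise), one builds by transfinite recursion of length $\lambda_1 + \lambda_2$ two sequences $\langle f_\alpha \rangle_{\alpha<\lambda_1}$ increasing and $\langle g_\beta \rangle_{\beta<\lambda_2}$ decreasing in $J^\kappa/D$, with every $f_\alpha$ below every $g_\beta$, arranging at each step that the coordinatewise comparisons agree with the prescribed sets from $\bar a$. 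At successor/limit stages one uses density of $J$ fiberwise and $|\alpha|+|\beta| < \lambda_1+\lambda_2 \le$ saturation degree to find the next element; the combinatorial consistency built into $\mathcal{S}_c$ is exactly what guarantees these finite-intersection requirements are satisfiable. This yields a cut; non-$c$-solvability of $\bar a$ is then used to argue the cut cannot be filled and that neither cofinality can be reduced, so its cofinality is precisely $(\lambda_1, \lambda_2)$, giving $(\lambda_1,\lambda_2) \in \mathscr{C}(D)$.

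The main obstacle I anticipate is the bookkeeping in the $(a) \Rightarrow (b)$ direction: one must maintain, throughout a recursion of length $\lambda_1 + \lambda_2$, a coherent family of coordinatewise constraints and check at each stage that the corresponding type over $J$ (at $D$-many coordinates simultaneously, then amalgamated by the ultrafilter) is finitely satisfiable — this is where the precise combinatorial closure conditions defining $\mathcal{S}_c$ and $c$-solvability must be invoked, and getting the quantifier over ``which coordinates'' to interact correctly with the ultrafilter $D$ is delicate. A secondary subtlety is checking that the cofinality of the resulting cut is not accidentally smaller than $(\lambda_1,\lambda_2)$; this is precisely the content of non-solvability, so the argument must show that any cofinal subsequence of smaller length, or any filling element, would yield a $c$-solution of $\bar a$.
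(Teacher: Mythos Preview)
Your overall picture is right---a non-$c$-solvable $\bar a$ should correspond to an unfillable $(\lambda_1,\lambda_2)$-gap in an ultrapower---but there are two places where you diverge from the paper, one a genuine simplification you're missing and one a misreading of the definitions.

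\textbf{The direct construction for $(a)\Rightarrow(b)$.} You propose building $\langle f_\alpha\rangle,\langle g_\beta\rangle$ by transfinite recursion of length $\lambda_1+\lambda_2$, using saturation of $J$ at each stage to realize a coordinatewise type. The paper avoids this entirely. The point is that conditions (c-1)--(c-5) on $\bar a$ say exactly that the relation $(i,s)<_A(i,t)\iff i\in a_{s,t}$ (extended lexicographically across the $\kappa$ coordinates) is a linear order on $A=\kappa\times N_c$. So one simply embeds $(A,<_A)$ into a $(\lambda_1+\lambda_2)^+$-saturated dense order $M$ and sets $f_s(i)=(i,s)$; all the $f_s$'s appear at once with $a_{s,t}=\{i:f_s(i)<_M f_t(i)\}$ holding by definition. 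No recursion, no bookkeeping, no finite-satisfiability checks. Your recursive approach could be made to work, but you would be re-proving by induction precisely the transitivity that (c-5) hands you for free.

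\textbf{What $c$-solvability actually is.} You speculate that solvability might produce ``a cofinal map $\lambda_2\to\lambda_1$'' or that non-solvability must be used to show the cofinality is not accidentally smaller. That's not what the definition says: a $c$-solution is a sequence $\bar b=\langle b_s:s\in N_c\rangle$ that extends $\bar a$ to $N_c^+=N_{c,1}+\{s_*\}+N_{c,2}$ while preserving (c-1)--(c-5). In other words, $c$-solvability is \emph{exactly} ``the gap can be filled by a single new point''; Lemma~3.1(b-2) makes this literal by showing $\bar a$ is $c$-solvable iff the type $q(x)=\{f_s/D<x<f_t/D:s\in N_{c,1},\ t\in N_{c,2}\}$ is realized. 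The cofinality issue takes care of itself: the index set is $N_c$, not $\lambda_1\times\lambda_2$, and $N_{c,1}$ already has cofinality $\lambda_1$ while $N_{c,2}$ has coinitiality $\lambda_2$; since $s\mapsto f_s/D$ is an order-embedding, the gap automatically has cofinality $(\lambda_1,\lambda_2)$.

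For $(b)\Rightarrow(a)$ your outline is essentially what the paper does: take $\bar f$ indexed by $N_c$ witnessing the cut, set $a_{s,t}=\{i:f_s(i)<f_t(i)\}$ (after perturbing the $f_s$'s to have pairwise disjoint ranges so that (c-3) holds), and observe via the same equivalence that solvability would fill the cut.
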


In the second part of the paper (Sections 4 and 5) we study the existence of non-symmetric pairs (i.e., pairs $(\l_1, \l_2)$ with $\l_1 \neq \l_2$) in  $\mathscr{C}(D).$
By \cite{shelah1026}, we can find a regular ultrafilter $D$ on $\kappa$ such that
\[
\mathscr{C}(D) \supseteq \{ (\lambda_1, \lambda_2): \aleph_0 < \lambda_1 < \lambda_2 \leq 2^\kappa, \l_1, \l_2 \text{~regular}         \}.
\]
In particular, $\mathscr{C}(D)$ contains non-symmetric pairs.
On the other hand, results of \cite{golshani-shelah} show that if $(\l_1, \l_2) \in \mathscr{C}_{> 2^\k}(D)$, then we must have $\l_1^\k = \lambda_2^\k,$
in particular if $\SCH$, the singular cardinals hypothesis, holds, then $\l_1 = \l_2,$ and so $\mathscr{C}_{> 2^\k}(D)$ just contains symmetric pairs.
We then prove the following theorem (in $\ZFC$):
\begin{theorem}
$(a)$ Assume $D$ is a uniform $\aleph_1$-complete ultrafilter on $\k$ and $(\l_1, \l_2) \in \mathscr{C}(D).$
Then $\l_1=\l_2.$

$(b)$ Assume $D$ is a uniform  ultrafilter on $\k$ and $(\l_1, \l_2) \in \mathscr{C}_{> 2^\k}(D).$
Then $\l_1=\l_2.$
\end{theorem}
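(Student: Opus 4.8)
The plan is to derive Theorem~1.2 from the combinatorial characterization in Theorem~1.1. Write $c=\langle\kappa,D,\lambda_1,\lambda_2\rangle$. I first record that $\mathscr C(D)$ is closed under transposition: if a $(\lambda_1,\lambda_2)$-cut occurs in $J^\kappa/D$ for some $(\lambda_1+\lambda_2)^+$-saturated dense linear order $J$, then the reverse order of $J$ is again such an order, leaves $D$ untouched, and carries the cut to a $(\lambda_2,\lambda_1)$-cut, so $(\lambda_2,\lambda_1)\in\mathscr C(D)$. Since both hypotheses of the theorem are transposition-invariant, I may argue by contradiction assuming $\lambda_1<\lambda_2$, and by Theorem~1.1 it then suffices to show that \emph{every} $\bar a\in\mathcal S_c$ is $c$-solvable. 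So I fix $\bar a\in\mathcal S_c$ and aim to build a $c$-solution; recall that $\bar a$ arranges along a tree of height $\le\kappa$ a ``lower'' system indexed by $\lambda_1$ and an ``upper'' system indexed by $\lambda_2$, and that $c$-solving $\bar a$ means producing, along a suitable branch, a single $\kappa$-indexed object squeezed $D$-between the two systems, the obstruction being that the two systems need only be consistent $D$-almost everywhere, not coordinatewise.

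For part (b) I first sharpen the hypothesis using the earlier work. Since $(\lambda_1,\lambda_2)\in\mathscr C_{>2^\kappa}(D)$, \cite{golshani-shelah} gives $\lambda_1^\kappa=\lambda_2^\kappa$; as $\lambda_1^\kappa=\lambda_2^\kappa\ge\lambda_2>2^\kappa$, whereas $\lambda_1\le2^\kappa$ would give $\lambda_1^\kappa\le(2^\kappa)^\kappa=2^\kappa$, we must have $\lambda_1>2^\kappa$, so in fact $2^\kappa<\lambda_1<\lambda_2$. Now I run a stabilization argument on the upper system: attach to each of its $\lambda_2$ nodes its ``local type'', i.e.\ its $D$-comparison pattern with the base data of $\bar a$; there are at most $2^\kappa$ such types, and since $\lambda_2>2^\kappa$ is regular one type $t^\ast$ is attained on a set cofinal in $\lambda_2$. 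Passing to the corresponding cofinal decreasing subsystem, now $t^\ast$-homogeneous, I build the squeezing object coordinate by coordinate: homogeneity of the upper side, together with the slack coming from $\lambda_1<\lambda_2$ (so that the shorter lower system fits between the homogeneous upper side and its ``$D$-limit''), defeats the almost-everywhere-versus-everywhere obstruction and the construction succeeds. This exhibits $\bar a$ as $c$-solvable, the desired contradiction.

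For part (a), with $D$ now $\aleph_1$-complete, the leverage is $\sigma$-additivity of $D$: any countably many $D$-large sets have $D$-large intersection; equivalently $\prod_D\delta$ is well-founded for every ordinal $\delta$. I would build the $c$-solution by recursion along the tree of $\bar a$, using this at successor and countable-cofinality limit stages to keep the partial squeezing object coordinatewise consistent with all the data seen so far, and using the asymmetry $\lambda_1<\lambda_2$, as in (b), to keep the lower $\lambda_1$-system fitting underneath the upper $\lambda_2$-system. Again this shows $\bar a$ is $c$-solvable, a contradiction. Hence $\lambda_1<\lambda_2$ is impossible; by transposition $\lambda_2<\lambda_1$ is impossible; so $\lambda_1=\lambda_2$ in both (a) and (b).

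The main obstacle, in both parts, is this last step: converting the soft input --- ``at most $2^\kappa$ local types, $\lambda_2$ regular above $2^\kappa$'' in (b), ``$\sigma$-completeness / well-foundedness'' in (a) --- into an honest $c$-solution inside the $\mathcal S_c$-formalism of Section~2, i.e.\ verifying that the coordinatewise and tree-recursive construction of the squeezing object really does go through once the relevant homogeneity or $\sigma$-closure is in hand, and that it is exactly the inequality $\lambda_1\ne\lambda_2$ that makes it go through --- this last point being essential, since by \cite{shelah1026} asymmetric cuts do occur when $\lambda_1+\lambda_2\le2^\kappa$ and $D$ is merely a regular ultrafilter. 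A secondary issue in (b) is to pin down ``local type'' so that the count is exactly $2^\kappa$ and that $t^\ast$-homogeneity is genuinely enough.
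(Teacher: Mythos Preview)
Your approach is quite different from the paper's, and as written it has genuine gaps that I do not see how to close.

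First, the paper does \emph{not} go through Theorem~1.1 at all. For each part it proves a stronger structural fact about ultraproducts of arbitrary linear orders: if $D$ is $\aleph_1$-complete (respectively, if both cofinalities exceed $2^\kappa$) and a cut in $\prod_{i<\kappa}I_i/D$ has cofinality $(\theta_1,\theta_2)$ with $\theta_1\neq\theta_2$, then that cut is \emph{internal}, i.e.\ equal to $\prod_{i<\kappa}J^l_i/D$ for coordinatewise cuts $(J^1_i,J^2_i)$. Theorem~1.2 then follows immediately, since an internal cut inherits on a $D$-large set a coordinate cofinality $\ge(\lambda_1+\lambda_2)^+$ from the saturation of $J$. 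The arguments for internality are direct: in~(a) one takes auxiliary well-orderings $<^2_i$ of the $I_i$, uses $\aleph_1$-completeness to make $\prod(I_i,<^2_i)/D$ well-ordered, and picks the $<_2$-least element whose $<_2$-predecessors are $<_1$-cofinal in $J^1$; in~(b) one minimizes, modulo $D$, the sequence of cardinalities of coordinate sets ``catching'' the cut, using Erd\H{o}s--Rado to bound the coinitiality of this minimization by $2^\kappa$.

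Second, your sketch misdescribes the objects of Section~2. An element $\bar a\in\mathcal S_c$ is a family $\langle a_{s,t}:s,t\in N_c\rangle$ of subsets of $\kappa$ satisfying the transitivity-type clauses (c-1)--(c-5); there is no tree of height $\le\kappa$, and a $c$-solution is a family $\langle b_s:s\in N_c\rangle$, not a single $\kappa$-indexed branch. Via Lemma~3.1 this is equivalent to asking whether a certain $(\lambda_1,\lambda_2)$-pre-cut $\langle f_s/D:s\in N_c\rangle$ in $M^\kappa/D$ is filled, so your informal picture of ``squeezing a function $f$'' is salvageable, but the tree-recursion language does not correspond to anything in the setup.

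Third, and this is the real gap, your pigeonhole step in~(b) is not justified. You want to attach to each $t\in N_{c,2}$ a ``local type'' taking at most $2^\kappa$ values. The natural candidates --- the function $f_t\in{}^\kappa M$, or the comparison map $s\mapsto a_{s,t}$ for $s\in N_{c,1}$, or the coordinatewise configuration $i\mapsto$ (position of $f_t(i)$ among the $f_s(i)$) --- all have far more than $2^\kappa$ possible values once $\lambda_1>2^\kappa$ or $|M|>2^\kappa$. You acknowledge that pinning down ``local type'' so the count is exactly $2^\kappa$ is a ``secondary issue'', but in fact it is the whole issue: without it there is no homogeneous cofinal subsystem and the argument does not start. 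The paper's Section~5 achieves the needed $2^\kappa$-bound by a completely different route (bounding the coinitiality of a certain order on cardinal sequences, not by counting types attached to elements of $N_{c,2}$), and that argument does not obviously translate back into your framework. Similarly in~(a), ``recursion along the tree using $\sigma$-additivity at countable-cofinality limits'' is a slogan, not a construction; the paper's use of $\aleph_1$-completeness is the single global fact that $\prod(I_i,<^2_i)/D$ is well-founded, which is then exploited via one minimal element, not a transfinite recursion.

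In short: your reduction to $c$-solvability and your preliminary observation that $2^\kappa<\lambda_1<\lambda_2$ (via $\lambda_1^\kappa=\lambda_2^\kappa$ from \cite{golshani-shelah}) are fine, but the core of both arguments is missing, and the paper's actual proofs proceed along an entirely different axis (internality of asymmetric cuts) that bypasses Theorem~1.1.
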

The theorem shows some restrictions on the pairs $(\l_1, \l_2)$ that
 $\mathscr{C}(D)$ can have, in particular, it shows that in the result of \cite{shelah1026} stated above, we can never take the ultrafilter $D$ to be
 $\aleph_1$-complete and that $\mathscr{C}_{> 2^\k}(D)$ can not have non-symmetric pairs.

 The paper is organized as follows. In section 2 we give the required definitions, which lead us to the notion of $c$-solvability and in section 3 we complete the proof of Theorem 1.1.
 In section 4 we prove part $(a)$ of  Theorem 1.2 and in section 5 we complete the proof of part $(b)$ of  Theorem 1.2.
We may note that parts one (Sections 2 and 3) and two (Sections 4 and 5) can be read independently of each other.
\section{On the notion of $c$-solvability}
In this section we give the required definitions which are used in Theorem 1.1.

\begin{definition}
\begin{enumerate}
\item [(a)] Let $\mathcal{C}$ be the class of tuples $c=\langle  \kappa_c, D_c, \lambda_{c,1}, \lambda_{c,2}       \rangle$ where
\begin{enumerate}
\item [(a-1)] $\lambda_{c,1}, \lambda_{c,2}$ are regular cardinals $> \kappa_c,$
\item [(a-2)] $D_c$ is a uniform ultrafilter on $\kappa_c.$
\end{enumerate}
Also let $\lambda_c=2^{<\lambda_{c,1}}+ 2^{<\lambda_{c,2}}$ and $\lambda_{c,0}=\min\{\lambda_{c,1}, \lambda_{c,2} \}.$

\item [(b)] For $c \in \mathcal{C}$ let $N_c = N_{c,1}+N_{c,2}$ be a linear order
of size $\leq \lambda_c$
in such a way that $N_{c,1}$ has cofinality $\lambda_{c,1}$, $N_{c,2}$ has co-initiality
$\lambda_{c,2}$ and both $N_{c,1}, N_{c,2}$ are  $\lambda_{c,0}$-saturated dense linear orders \footnote{$N_c$ is some fixed linear order which we choose in advance. We may assume global choice and let $N_c$ be the least such order.}.

\item [(c)]  For $c \in \mathcal{C}$ let $\mathcal{S}_c$ be the set of all sequences $\bar{a}= \langle   a_{s,t}: s, t \in N_c     \rangle$ such that
\begin{enumerate}
\item [(c-1)] Each $a_{s,t}$ is a subset of $\kappa_c,$
\item [(c-2)] $a_{s,s}=\emptyset,$
\item [(c-3)] For $s \neq t, a_{s,t}=\kappa \setminus a_{t,s},$
\item [(c-4)] $s <_{N_c}t \Rightarrow a_{s,t} \in D_c,$
\item [(c-5)] If $s_1 <_{N_c} s_2 <_{N_c} s_3$, then
\[
(a_{s_1, s_3} \supseteq a_{s_1, s_2} \cap a_{s_2, s_3}) ~ \&~ (a_{s_3, s_1} \supseteq a_{s_3, s_2} \cap a_{s_2, s_1}).
\]
\end{enumerate}
\item [(d)]  For $c \in \mathcal{C}$ let $N_c^+=N_{c,1} + N_0 + N_{c,2},$ where $N_0$ is a singleton, say $N_0=\{  s_* \}.$
\end{enumerate}
\end{definition}
We now define the notion of $c$-solvability.
\begin{definition}
Let $c \in \mathcal{C}$. We say $\bar{a} \in \mathcal{S}_c$ is $c$-solvable, if there exists a sequence $\bar{b}= \langle b_s: s \in N_c   \rangle,$ such that the sequence
$\bar{a}^1= \bar{a}*\bar{b}$ satisfies clauses $(c$-$1)$-$(c$-$5)$ above, where the sequence $\bar{a}^1= \langle   a^1_{s,t}: s, t \in N_c^+     \rangle$
is defined as follows:
\begin{enumerate}
\item If $s,t \in N_c,$ then $a^1_{s,t}=a_{s,t}$,
\item For $s \in N_{c,1}, a^1_{s, s_*}=b_s$ and $a^1_{s_*, s}=\kappa_c \setminus b_s,$
\item For $s \in N_{c,2}, a^1_{s_*, s}=b_s$ and $a^1_{s, s_*}=\kappa_c \setminus b_s,$
\item $a^1_{s_*, s_*} =\emptyset.$
\end{enumerate}
Then $\bar{b}$ is called a $c$-solution for $\bar{a}$.
\end{definition}

\section{A combinatorial characterization of $\mathscr{C}(D)$}
In this section we give a proof of Theorem 1.1.
\begin{lemma}
Assume $c \in \mathcal{C}$ and  $\bar{a} \in \mathcal{S}_c$. Then
\begin{enumerate}
\item [(a)]  There are $M, \bar{f}$ such that
\begin{enumerate}
\item [(a-1)] $M$ is a $(\lambda_{c,1}+\lambda_{c,2})^+$-saturated dense linear order,

\item [(a-2)] $\bar{f}=\langle f_s: s \in N_c      \rangle,$

\item [(a-3)] Each $f_s \in$$~ ^{\kappa_c}$$M$,

\item [(a-4)] If $s <_{N_c} t,$ then $a_{s,t} = \{i< \kappa_c: f_s(i) <_M f_t(i)    \}$,

\item [(a-4)] $\langle \range(f_s): s \in N_c    \rangle$
is a sequence of pairwise disjoint sets.
\end{enumerate}
\item [(b)] If $M, \bar{f}$ are as in $($a$)$, then
\begin{enumerate}
\item [(b-1)] $\langle  f_s / D_c: s \in N_c    \rangle$ is an increasing sequence in $^{\kappa_c}$$M/D_c$,

\item [(b-2)] $\bar{a}$ is $c$-solvable iff  $^{\kappa_c}$$M/D_c$ realizes the type
\[
q(x)=\{ f_s / D_c < x < f_t / D_c: s \in N_{c,1} \text{~and~} t \in N_{c,2}        \}.
\]
\end{enumerate}
\end{enumerate}
\end{lemma}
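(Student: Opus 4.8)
The plan is to build the linear order $M$ and the functions $f_s$ essentially by brute force, encoding the given combinatorial data $\bar a$ into an order, and then to verify that realizing the type $q(x)$ is the same as finding a $c$-solution.

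For part (a), I would construct $M$ as a large saturated dense linear order and the $f_s$ coordinatewise. Fix $i<\kappa_c$. For each index $s\in N_c$ I want to place a ``point'' $f_s(i)$ into $M$, in such a way that for $s\ne t$, $f_s(i)<_M f_t(i)$ exactly when $i\in a_{s,t}$. So define a relation $<_i$ on $N_c$ by $s<_i t \iff i\in a_{s,t}$; clauses (c-2)(c-3) say this is irreflexive and satisfies trichotomy, and clause (c-5) — when restricted to $i$ — gives transitivity on $<_{N_c}$-increasing triples, but one needs transitivity on \emph{all} triples. Here is where I would have to be a little careful: the ``right'' statement is that $<_i$ is a linear order on $N_c$ for each $i$, and this should follow from (c-5) together with (c-3) by checking the various case configurations of a triple $s_1,s_2,s_3$ relative to $<_{N_c}$ (using symmetry (c-3) to reduce to the one ordering covered by (c-5)). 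Having $<_i$ as a linear order, I take a copy $M_i$ of a $(\lambda_{c,1}+\lambda_{c,2})^+$-saturated dense order containing an order-embedding of $(N_c,<_i)$, set $f_s(i)$ to be the image of $s$; to get the disjointness in (a-4)(second) I either work in disjoint copies across $i$ or perturb within $M_i$ — since $|N_c|\le\lambda_c<(\lambda_{c,1}+\lambda_{c,2})^+$ there is always room. Finally let $M$ be a saturated dense order into which each $M_i$ embeds (e.g. a saturated elementary extension of $\sum_i M_i$, or simply take all $M_i$ to be the same fixed model $M$). Then (a-1)–(a-4) hold by construction.

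For part (b), clause (b-1) is immediate: if $s<_{N_c}t$ then by (c-4) $a_{s,t}\in D_c$, and by (a-4) $a_{s,t}=\{i:f_s(i)<_M f_t(i)\}$, so $f_s/D_c<f_t/D_c$ in the ultrapower. For (b-2), the forward direction: given a $c$-solution $\bar b=\langle b_s:s\in N_c\rangle$, I define $g\in {}^{\kappa_c}M$ coordinatewise. Fix $i$; the extended relation $<^1_i$ on $N_c^+=N_{c,1}+N_0+N_{c,2}$, defined from $\bar a^1$ as in Definition 2.2, is again a linear order (that is exactly the assertion that $\bar a^1$ satisfies (c-1)–(c-5)), and it restricts to $<_i$ on $N_c$. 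I want $g(i)$ to sit in $M$ in the order-position dictated by $s_*$ relative to $\{f_s(i):s\in N_c\}$, i.e. above $f_s(i)$ for $s\in N_{c,1}$ with $i\in b_s$ and below $f_t(i)$ for $t\in N_{c,2}$ with $i\notin b_t$, consistently with $<^1_i$. Since $(N_c^+,<^1_i)$ is a linear order extending $(N_c,<_i)$ and $M$ (being dense, and with the relevant cut having a bounded ``width'' $\le\lambda_c$) realizes the corresponding cut, such a $g(i)$ exists. Then for $s\in N_{c,1}$, $\{i:f_s(i)<_M g(i)\}=b_s$, and by (c-4) applied to $\bar a^1$ (clause $s<_{N_c^+}s_*$) this set is in $D_c$, so $f_s/D_c<g/D_c$; symmetrically $g/D_c<f_t/D_c$ for $t\in N_{c,2}$. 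Hence $g/D_c$ realizes $q(x)$.

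Conversely, if some $h/D_c\in {}^{\kappa_c}M/D_c$ realizes $q(x)$, define $b_s=\{i<\kappa_c:f_s(i)<_M h(i)\}$ for $s\in N_{c,1}$ and $b_s=\{i:h(i)<_M f_s(i)\}$ for $s\in N_{c,2}$. I claim $\bar b$ is a $c$-solution, i.e. $\bar a^1=\bar a*\bar b$ satisfies (c-1)–(c-5). Clauses (c-1),(c-2),(c-3) for the new entries are immediate from the definition (for $s\in N_{c,1}$, $a^1_{s_*,s}=\kappa_c\setminus b_s=\{i:h(i)\le_M f_s(i)\}$, and equality $h(i)=f_s(i)$ can be absorbed by disjointness/genericity or simply does not matter mod the trichotomy bookkeeping — one sets things up so $h$ avoids the ranges of the $f_s$). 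Clause (c-4), $a^1_{s,s_*}\in D_c$ for $s\in N_{c,1}$ and $a^1_{s_*,t}\in D_c$ for $t\in N_{c,2}$, is exactly the statement that $h/D_c$ satisfies $f_s/D_c<h/D_c<f_t/D_c$, i.e. that $h/D_c\satisfies q(x)$. Clause (c-5) for triples involving $s_*$ reduces, after using (c-3), to transitivity in $M$ of $<_M$ among $f_{s_1}(i),f_{s_2}(i),h(i)$ etc., which is automatic since $<_M$ is a linear order; the triples not involving $s_*$ are clauses of the original $\bar a\in\mathcal S_c$. Thus $\bar b$ is a $c$-solution and $\bar a$ is $c$-solvable.

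\textbf{The main obstacle} I anticipate is the bookkeeping in part (a): showing that clauses (c-2)–(c-5), which only \emph{explicitly} constrain $<_{N_c}$-increasing triples, in fact force each $<_i$ to be a genuine linear order on all of $N_c$ — one has to run through the six orderings of a triple and reduce each to the one case (c-5) handles via the flip $a_{s,t}=\kappa_c\setminus a_{t,s}$. Everything after that is soft: the existence of saturated dense orders with prescribed finite suborders, the fact that realizing a cut of small width in a saturated order is automatic, and the dictionary ``$\bar b$ is a $c$-solution $\iff$ the corresponding $h$ realizes $q$'' which is just unwinding Definition 2.2 against (a-4) and Łoś's theorem.
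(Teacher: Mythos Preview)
Your proposal is correct and follows essentially the same route as the paper: the paper builds a single order $A=\{(i,s):i<\kappa_c,\ s\in N_c\}$ with $(i_1,s_1)<_A(i_2,s_2)$ iff $i_1<i_2$ or $(i_1=i_2\in a_{s_1,s_2})$, embeds it into a saturated $M$, and sets $f_s(i)=(i,s)$---which is exactly your coordinatewise $(N_c,<_i)$ construction packaged lexicographically; for (b-2) the paper writes out the per-coordinate type $p_i(x)$ and checks finite satisfiability via (c-5) for $\bar a^1$, then in the converse first perturbs $f$ off $A$ and defines $\bar b$ precisely as you do. The transitivity bookkeeping you flag as the main obstacle is exactly what the paper sweeps under ``It is easily seen that $\leq_A$ is a linear order on $A$''.
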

\begin{proof}
$($a$)$ Let $A=\{ (i, s): i < \kappa_c, s \in N_c   \}$, and define the order $<_A$ on $A$ by
\[
(i_1, s_1) <_A (i_2, s_2) \iff (i_1 < i_2) \text{~or~} (i_1=i_2 \in a_{s_1, s_2}).
\]
Also let $\leq_A$ be defined on $A$ in the natural way from $<_A$, so
  \[
(i_1, s_1) \leq_A (i_2, s_2) \iff (i_1, s_1) = (i_2, s_2)  \text{~or~} (i_1, s_1) <_A (i_2, s_2) .
\]
It is easily seen that $\leq_A$ is a linear order on $A$. Now let $M$ be a $(\lambda_{c,1}+\lambda_{c,2})^+$-saturated dense linear order which contains $(A, <_A)$
as a sub-order. Also let  $\bar{f}=\langle f_s: s \in N_c      \rangle,$ where for $s \in N_c$  $f_s \in ~$$^{\kappa_c}$$M$ is defined by $f_s(i)=(i, s).$
It is clear that  $M,$ and $\bar{f}$ satisfy clauses $($a-1$)$-$($a-3$)$. For $($a-4$)$, assume $s <_{N_c} t$ are given. Then
\begin{center}
$a_{s,t}=\{ i < \kappa_c: i \in a_{s,t}       \}=\{i < \kappa_c: (i, s) <_A (i, t)    \}= \{i< \kappa_c: f_s(i) <_M f_t(i)    \}.$
\end{center}
Finally note that for $s \neq t$ in $N_c$,
\[
\range(f_s) \cap \range(f_t) = \{(i, s): i < \kappa_c     \} \cap \{(i, t): i < \kappa_c     \}=\emptyset.
\]
So $M$ and $\bar{f}$ are as required.

$($b$)$ $($b-$1)$ follows from $($a-$4)$ and the fact that for $s <_{N_c} t, a_{s,t} \in D_c$. Let's prove $($b-$2)$. First assume that $\bar{a}$ is $c$-solvable and let $\bar{b}$ be a solution for $\bar{a}.$
For each $i<\kappa_c$ let $p_i(x)$ be the following type over $M$:
\begin{center}
$p_i(x)=\{ f_s(i) <_M x: s \in N_{c,1}$ and $i \in b_s   \} \cup \{ x <_M f_t(i): t \in N_{c,2}$ and $i \in \kappa_c \setminus b_t   \}$.
\end{center}
\begin{claim}
For each $i<\kappa_c,$ the type $p_i(x)$ is finitely satisfiable in $M$.
\end{claim}
\begin{proof}
Let $s_0 <_{N_{c,1}} \dots <_{N_{c,1}} s_{n-1}$ be in $N_{c,1}$ and $t_{m-1} <_{N_{c,2}} < \dots <_{N_{c,2}} t_0$ be in $N_{c,2}$. Also suppose that $i \in \bigcap_{k < n} b_{s_k} \cap \bigcap_{l < m} (\kappa_c \setminus b_{t_l})$. Then for $k<n$ and $l<m$ we have
\[
a_{s_k, t_l} \supseteq a^1_{s_k, s_*} \cap a^1_{s_*, t_l} = b_{s_k} \cap (\kappa_c \setminus b_{t_l}),
\]
and so $i \in a_{s_k, t_l}$, which implies $f_{s_k}(i) < f_{t_l}(i).$  Take $x \in M$ so that
\[
\forall k<n, \forall l<m,~ f_{s_k}(i)< x < f_{t_l}(i),
\]
which exists as $M$ is dense. It follows that $p_i(x)$ is finitely satisfiable in $M$.
\end{proof}
It follows that there exists $f \in$$~ ^{\kappa_c}$$M$ such that for each $i<\kappa_c, f(i)$ realizes the type $p_i(x)$ over $M$. Then $f/D_c$ realizes $q(x)$
over $^{\kappa_c}$$M/D_c$.

Conversely assume that $f \in$$~^{\kappa_c}$$M$ is such that $f/D_c$ realizes the type $q(x)$ over $^{\kappa_c}$$M/D_c$.
\begin{claim}
We can assume that $\range(f)$ is disjoint from $A$.
\end{claim}
\begin{proof}
As $\langle \range(f_s): s \in N_c    \rangle$
is a sequence of pairwise disjoint sets and $\lambda_{c,1}, \lambda_{c,2} > \kappa_c$ are regular, there are $s_1 \in N_{c,1}$ and $s_2 \in N_{c,2}$
such that $s_1 <_{N_c} s <_{N_c} s_2$ implies $\range(f_s) \cap \range(f) =\emptyset$. As $M$ is
a $(\lambda_{c,1}+\lambda_{c,2})^+$-saturated dense linear order, there is $f'$ such that
\begin{itemize}
\item $f' \in ~^{\kappa_c}$$M$,
\item $\range(f') \cap A =\emptyset,$
\item If $s_1 <_{N_c} s <_{N_c} s_2$ and $i< \kappa_c,$ then $f_s(i) <_{N_c} f'(i) \Rightarrow f_s(i) <_{N_c} f(i)$
and $f'(i) <_{N_c} f_s(i) \Rightarrow f(i) <_{N_c} f_s(i).$
\end{itemize}
So we can replace $f$ by $f'$ and $f'$ satisfies the requirements on $f$; i.e., $f'/D_c$
realizes  $q(x)$ over $^{\kappa_c}$$M/D_c$ and further $\range(f') \cap A =\emptyset$.
\end{proof}
Now define $\bar{b}= \langle b_s: s \in N_c  \rangle$ by
\begin{center}
 $b_s = \left\{ \begin{array}{l}
       \{ i<\kappa_c: f_s(i) <_{N_c} f(i)    \}  \hspace{1.1cm} \text{ if } s \in N_{c,1},\\
       \{ i<\kappa_c: f(i) <_{N_c} f_s(i)    \}  \hspace{1.1cm} \text{ if } s \in N_{c,2}.
 \end{array} \right.$
\end{center}

\begin{claim}
$\bar{b}$ is a $c$-solution for $\bar{a}$.
\end{claim}
\begin{proof}
We show that conditions $($c-$1)$-$($c-$5)$ of Definition 2.1 are satisfied by $\bar{a}^1=\bar{a}*\bar{b}$ (see Definition 2.2). $($c-$1)$ and $($c-$2)$ are trivial and $($c-$3)$ follows from the fact that $\forall i<\kappa_c, f_s(i) \neq f(i)$ (as $\range(f) \cap A =\emptyset$).

For $($c-$4)$, suppose that $s <_{N_c^+} t$. If both $s,t$
are in $N_c$, then we are done. So suppose otherwise. There are two cases to consider.
\begin{itemize}
\item  If $s=s_*,$ then $t \in N_{c,2}$ and as $f/D_c$ realizes $q(x),$  we have
$f/D_c < f_t/D_c,$ which implies $a^1_{s_*, t}=b_t= \{ i<\kappa_c: f(i) <_{N_c} f_t(i)    \} \in D_c.$

\item
If $t=s_*,$ then $s \in N_{c,1}$ and
as $f/D_c$ realizes $q(x),$  we have
$f_s <_{D_c} f,$ which implies $a^1_{s, s_*}=b_s=\{ i<\kappa_c: f_s(i) <_{N_c} f(i)    \} \in D_c.$
\end{itemize}

For $($c-$5)$, assume $s_1 <_{N_c^+} s_2 <_{N_c^+} s_3$ are in $N_c^+.$ If all $s_1, s_2$ and $s_3$ are in $N_c$, then we are done. So assume otherwise. There are three cases to be considered:
\begin{itemize}
\item If $s_1=s_*,$ then $s_2, s_3 \in N_{c,2},$ and we have

$a^1_{s_*, s_2} \cap a^1_{s_2, s_3} = b_{s_2} \cap a_{s_2, s_3}$

$\hspace{2.2cm}$$= \{i < \kappa_c: (~f(i) <_{N_c} f_{s_2}(i)~) \wedge (~f_{s_2}(i) <_{N_c} f_{s_3}(i)~)     \}$

$\hspace{2.2cm}$$\subseteq  \{ i<\kappa_c: f(i) <_{N_c} f_{s_3}(i)    \}$

$\hspace{2.2cm}$$=b_{s_3}$

$\hspace{2.2cm}$$=a^1_{s_*, s_3}$.

Similarly,

$a^1_{s_3, s_2} \cap a^1_{s_2, s_*} =  a_{s_3, s_2} \cap (\kappa_c \setminus b_{s_2})$

$\hspace{2.2cm}$$= \{i < \kappa_c:  (~f_{s_2}(i) \geq_{N_c} f_{s_3}(i)~) \wedge (~f(i) >_{N_c} f_{s_2}(i)~)     \}$

$\hspace{2.2cm}$$\subseteq  \{ i<\kappa_c: f(i) >_{N_c} f_{s_3}(i)    \}$

$\hspace{2.2cm}$$=\kappa_c \setminus b_{s_3}$

$\hspace{2.2cm}$$=a^1_{s_3, s_*}$.

\item If $s_2=s_*,$ then $s_1 \in N_{c,1}$, $s_3 \in N_{c,2}$ and we have

$a^1_{s_1, s_*} \cap a^1_{s_*, s_3} = b_{s_1} \cap b_{s_3}$

$\hspace{2.2cm}$$= \{i < \kappa_c: (~f_{s_1}(i) <_{N_c} f(i)~) \wedge (~f(i) <_{N_c} f_{s_3}(i)~)     \}$

$\hspace{2.2cm}$$\subseteq  \{ i<\kappa_c: f_{s_1}(i) <_{N_c} f_{s_3}(i)    \}$

$\hspace{2.2cm}$$=a_{s_1, s_3}$.

$\hspace{2.2cm}$$=a^1_{s_1, s_3}$.

Also,

$a^1_{s_3, s_*} \cap a^1_{s_*, s_1} = (\kappa_c \setminus b_{s_3}) \cap (\kappa_c \setminus b_{s_1})$

$\hspace{2.2cm}$$= \{i < \kappa_c:  (~f_{s_3}(i) <_{N_c} f(i)~) \wedge (~f(i) <_{N_c} f_{s_1}(i)~)     \}$

$\hspace{2.2cm}$$\subseteq  \{ i<\kappa_c: f_{s_3}(i) \leq_{N_c} f_{s_1}(i)    \}$

$\hspace{2.2cm}$$=a_{s_3, s_1}$

$\hspace{2.2cm}$$=a^1_{s_3, s_1}$.

\item If $s_3=s_*,$ then  $s_1, s_2 \in N_{c,1}$ and we have

$a^1_{s_1, s_2} \cap a^1_{s_2, s_*} = a_{s_1, s_2} \cap b_{s_2}$

$\hspace{2.2cm}$$= \{i < \kappa_c: (~f_{s_1}(i) <_{N_c} f_{s_2}(i)~) \wedge (~f_{s_2}(i) <_{N_c} f(i)~)     \}$

$\hspace{2.2cm}$$\subseteq  \{ i<\kappa_c: f_{s_1}(i) <_{N_c} f(i)    \}$

$\hspace{2.2cm}$$=b_{s_1}$.

$\hspace{2.2cm}$$=a^1_{s_1, s_*}$.

Similarly, we have

$a^1_{s_*, s_2} \cap a^1_{s_2, s_1} = (\kappa_c \setminus b_{s_2}) \cap a_{s_2, s_1}$

$\hspace{2.2cm}$$= \{i < \kappa_c:  (~f_{s_2}(i) >_{N_c} f(i)~) \wedge (~f_{s_1}(i) \geq_{N_c} f_{s_2}(i)~)     \}$

$\hspace{2.2cm}$$\subseteq  \{ i<\kappa_c: f_{s_1}(i) >_{N_c} f(i)    \}$

$\hspace{2.2cm}$$=\kappa_c \setminus b_{s_1}$

$\hspace{2.2cm}$$=a^1_{s_*, s_1}$.
\end{itemize}
Hence, $\bar{b}$ is a $c$-solution for $\bar{a}$, as required.
\end{proof}
The lemma follows.
\end{proof}
Given $c \in \mathcal{C}$, the next lemma gives a characterization, in terms of $c$-solvability, of when $( \lambda_{c,1}, \lambda_{c,2})$ is in $\mathscr{C}(D_c)$, which
also  completes the proof of Theorem 1.1.
\begin{lemma}
Assume $c \in \mathcal{C}$ and $M$ is a $\lambda_c^+$-saturated dense linear order. The following are equivalent:

$(a)$ There is $\bar{a} \in \mathcal{S}_c$ which is not $c$-solvable.

$(b)$ $( \lambda_{c,1}, \lambda_{c,2}) \in \mathscr{C}(D_c)$.
\end{lemma}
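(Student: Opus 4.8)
My plan is to prove $(a)\Leftrightarrow(b)$ by setting up, in each direction, a correspondence between non-$c$-solvable $\bar a\in\mathcal S_c$ and cuts of cofinality $(\lambda_{c,1},\lambda_{c,2})$ in an ultrapower ${}^{\kappa_c}J/D_c$, where $J$ ranges over $(\lambda_{c,1}+\lambda_{c,2})^+$-saturated dense linear orders. The only tools are Lemma 3.1, Definition 2.2, and the elementary fact that every linear order of size $\le\lambda$ embeds into any $\lambda^+$-saturated dense linear order; at the outset I note that, since $\lambda_c=2^{<\lambda_{c,1}}+2^{<\lambda_{c,2}}\ge\lambda_{c,1}+\lambda_{c,2}$, the given $M$ is $(\lambda_{c,1}+\lambda_{c,2})^+$-saturated and hence an admissible witness for membership in $\mathscr C(D_c)$, so $(b)$ does not depend on which such $M$ is used. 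Before the two directions I would record a purely combinatorial restatement of $c$-solvability: for $\bar a\in\mathcal S_c$ and $i<\kappa_c$ put $s<^i t\iff i\in a_{s,t}$, so that (by (c-1)--(c-3),(c-5), as in the proof of Lemma 3.1) each $<^i$ is a linear order on $N_c$, (by (c-4)) $\{i:s<^i t\}\in D_c$ whenever $s<_{N_c}t$, and conversely any such family of linear orders comes from a unique $\bar a\in\mathcal S_c$; unwinding Definition 2.2 (a solution $\bar b$ is exactly the datum, in each coordinate $i$, of the $<^i$-initial segment $B_i\subseteq N_c$ just below which $s_*$ is inserted, while the requirements ``$b_s\in D_c$'' become the two clauses below) then yields that \emph{$\bar a$ is $c$-solvable iff there are $<^i$-initial segments $B_i\subseteq N_c$ $(i<\kappa_c)$ with $\{i:s\in B_i\}\in D_c$ for all $s\in N_{c,1}$ and $\{i:s\notin B_i\}\in D_c$ for all $s\in N_{c,2}$}. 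This is the one bookkeeping step, and it is routine.

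For $(a)\Rightarrow(b)$ I would take a non-$c$-solvable $\bar a$ and apply Lemma 3.1(a) to get $M',\bar f$; by Lemma 3.1(b) the type $q(x)$ is omitted in ${}^{\kappa_c}M'/D_c$, and since the $f_s$ have pairwise disjoint ranges, $s\mapsto f_s/D_c$ is a strictly increasing, hence injective, embedding of $N_c$. Then $L:=\{y:y\le f_s/D_c\ \text{for some}\ s\in N_{c,1}\}$ and $U:=\{y:y\ge f_t/D_c\ \text{for some}\ t\in N_{c,2}\}$ partition ${}^{\kappa_c}M'/D_c$ into a cut $(L,U)$ — the fact that $L\cup U$ exhausts the ultrapower is precisely the omission of $q(x)$, and the other cut axioms follow from the absence of a last element in $N_{c,1}$ and of a first element in $N_{c,2}$. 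Moreover $\{f_s/D_c:s\in N_{c,1}\}$ is an order copy of $N_{c,1}$ cofinal in $L$ and $\{f_t/D_c:t\in N_{c,2}\}$ is an order copy of $N_{c,2}$ coinitial in $U$, so the cut has cofinality $(\lambda_{c,1},\lambda_{c,2})$; since $M'$ is $(\lambda_{c,1}+\lambda_{c,2})^+$-saturated this gives $(\lambda_{c,1},\lambda_{c,2})\in\mathscr C(D_c)$.

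For $(b)\Rightarrow(a)$ I would fix a cut $(C_1,C_2)$ of ${}^{\kappa_c}M/D_c$ of cofinality $(\lambda_{c,1},\lambda_{c,2})$, with $\langle h_\alpha/D_c:\alpha<\lambda_{c,1}\rangle$ increasing cofinal in $C_1$ and $\langle k_\beta/D_c:\beta<\lambda_{c,2}\rangle$ decreasing coinitial in $C_2$, and fix an increasing cofinal sequence $\langle s_\alpha:\alpha<\lambda_{c,1}\rangle$ in $N_{c,1}$ and a decreasing coinitial sequence $\langle t_\beta:\beta<\lambda_{c,2}\rangle$ in $N_{c,2}$. Put $g_s:=h_{\alpha(s)}$ for $s\in N_{c,1}$, where $\alpha(s)=\min\{\alpha:s\le_{N_{c,1}}s_\alpha\}$, and $g_s:=k_{\beta(s)}$ for $s\in N_{c,2}$, where $\beta(s)=\min\{\beta:t_\beta\le_{N_{c,2}}s\}$; then $s\mapsto g_s/D_c$ is $\le$-monotone on $N_c$, equals $h_\alpha/D_c$ at $s_\alpha$ and $k_\beta/D_c$ at $t_\beta$, and lands in $C_1$ on $N_{c,1}$ and in $C_2$ on $N_{c,2}$. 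This map need not be injective, so instead of pulling $<_M$ back along it I define $s<^i t$ iff $g_s(i)<_M g_t(i)$, or $g_s(i)=g_t(i)$ and $s<_{N_c}t$; each $<^i$ is then a linear order on $N_c$, and the associated $\bar a$ (with $a_{s,t}=\{i:s<^i t\}$) lies in $\mathcal S_c$, since for $s<_{N_c}t$ we have $\{i:s<^i t\}=\{i:g_s(i)\le_M g_t(i)\}\in D_c$. To see $\bar a$ is not $c$-solvable, suppose $\langle B_i\rangle$ were as in the restatement; because $B_i$ is $<^i$-initial and $<^i$ refines the quasi-order ``$g_\bullet(i)\le_M$'', for each $i$ the set of requirements $\{g_s(i)\le x:s\in B_i\}\cup\{x\le g_t(i):t\notin B_i\}$ involves at most $\lambda_c$ parameters and is finitely satisfiable in $M$, hence is realized by some $g(i)\in M$; the resulting $g/D_c$ satisfies $g_s/D_c\le g/D_c$ for $s\in N_{c,1}$ and $g/D_c\le g_s/D_c$ for $s\in N_{c,2}$, so $g/D_c\ge h_\alpha/D_c$ for all $\alpha$ and $g/D_c\le k_\beta/D_c$ for all $\beta$, making $g/D_c$ both an upper bound of $C_1$ and a lower bound of $C_2$ — impossible since $C_1$ has no greatest and $C_2$ no least element while $C_1\cup C_2={}^{\kappa_c}M/D_c$. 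Thus $\bar a$ witnesses $(a)$.

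I expect the genuine difficulty to be exactly the point the tie-breaking in $(b)\Rightarrow(a)$ is designed to circumvent. The naive attempt — to embed $N_c$ cofinally into the cut inside ${}^{\kappa_c}M/D_c$ — fails, because that ultrapower need not be $\lambda_c^+$-saturated (for instance when $D_c$ is $\aleph_1$-complete), so $N_c$ may simply not fit near the cut; what does go through is a merely monotone tracking of the cut, with the $N_c$-order used only to break ties in each coordinate, after which the saturation that \emph{is} available — that of $M$ itself, applied coordinatewise to a cut of size $\le\lambda_c$ — is precisely what manufactures the filler $g$ needed to reach the contradiction. The only other point requiring care is the combinatorial restatement of $c$-solvability, but that is a direct unpacking of Definition 2.2.
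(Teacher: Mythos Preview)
Your proof is correct and follows the same overall architecture as the paper's: both directions go through Lemma~3.1, and for $(a)\Rightarrow(b)$ your argument is identical to the paper's.

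For $(b)\Rightarrow(a)$ there is a genuine, if minor, difference. The paper simply writes ``assume that $M$ and $\bar f=\langle f_s:s\in N_c\rangle$ witness $(\lambda_{c,1},\lambda_{c,2})\in\mathscr C(D_c)$'', then uses the $\lambda_c^+$-saturation of $M$ to perturb the $f_s$ to $f'_s$ with pairwise disjoint ranges and no repetitions, defines $a_{s,t}=\{i:f_s(i)<f_t(i)\}$, and invokes Lemma~3.1(b-2). This tacitly assumes one can index a strictly increasing sequence in the ultrapower by all of $N_c$, cofinal and coinitial at the cut --- the very point you flag as not obvious, since ${}^{\kappa_c}M/D_c$ need not be $\lambda_c^+$-saturated. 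Your fix --- track the cut by a merely weakly monotone $s\mapsto g_s/D_c$ obtained from cofinal and coinitial $\omega$-type sequences, then break ties coordinatewise with the $N_c$-order to get honest linear orders $<^i$ --- cleanly sidesteps this; the saturation of $M$ is then used only coordinatewise to build the filler $g$, which is exactly how the paper deploys it. Your combinatorial restatement of $c$-solvability (a $c$-solution is the same datum as a family of $<^i$-initial segments $B_i$ with the two $D_c$-conditions) is correct and makes the contradiction step transparent. So your route is a more careful variant of the paper's argument rather than a different idea, and it patches a point the paper leaves implicit.
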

\begin{proof}
First assume there exists $\bar{a} \in \mathcal{S}_c$ which is not $c$-solvable. By Lemma 3.1, there are $M, \bar{f}$
which satisfy clauses $(1$-a$)$-$(1$-e$)$ of that lemma. But then as $\bar{a}$  is not $c$-solvable,
by Lemma 2.3$($b-$2)$, the type
\[
q(x)=\{ f_s / D_c < x < f_t / D_c: s \in N_{c,1} \text{~and~} t \in N_{c,2}        \}.
\]
is not realized. It follows that  $( \lambda_{c,1}, \lambda_{c,2}) \in \mathscr{C}(D_c)$.

Conversely assume that $M$ and $\bar{f}=\langle  f_s: s \in N_c     \rangle$
witness $( \lambda_{c,1}, \lambda_{c,2}) \in \mathscr{C}(D_c)$. Let $A=\bigcup \{ \range(f_s): s \in N_c    \}$,
and let $\langle I_d: d \in A    \rangle$
be a sequence of pairwise disjoint intervals of $M$ such that $d \in I_d$ \footnote{The existence of the sequence $\langle I_d: d \in A    \rangle$ follows from the fact that $|A| \leq \kappa_c \cdot |N_c| \leq \lambda_c$ and $M$ is $\lambda_c^+$-saturated.}. For $s \in N_c,$ let $f'_s \in$$~^{\kappa_c}$$M$
be such that $f'_s(i) \in I_{f_s(i)}$ and $\langle f'_s(i): s \in N_c, i < \kappa      \rangle$
is with no repetitions.
Define the sequence $\bar{a}=\langle  a_{s,t}: s, t \in N_c       \rangle$, such that
for $s <_{N_c}t,$$~a_{s,t}=\{ i<\kappa: f_s(i) < f_t(i)             \}$ and $a_{t,s}=\kappa_c \setminus a_{s,t}$. Also set $a_{s,s}=\emptyset.$
It is evident that
$\bar{a} \in \mathcal{S}_c.$

\begin{claim}
$\bar{a}$ is not $c$-solvable.
\end{claim}
\begin{proof}
Assume not. Then by Lemma 3.1$($b-$2)$, the type
\[
q(x)=\{ f_s / D_c < x < f_t / D_c: s \in N_{c,1} \text{~and~} t \in N_{c,2}        \}
\]
is  realized in $^{\kappa_c}$$M/D_c$, which contradicts
the choice of $M, \bar{f}$.
\end{proof}
The Lemma follows.
\end{proof}

\section{For $\aleph_1$-complete ultrafilter, $\mathscr{C}(D)$ contains no non-symmetric pairs}
In this section we prove part $(a)$ of Theorem 1.2. In fact we will prove something stronger, that is of interest in its own sake.
\begin{definition}
Assume $D$ is an ultrafilter on $\kappa$, $\langle  I_i: i<\kappa     \rangle$ is a sequence of linear orders and $I=\prod_{i<\kappa} I_i/D.$
\begin{enumerate}
\item [$(a)$] a subset $K$ of $I$ is called internal if there are subsets $K_i \subseteq I_i$ such that $K = \prod_{i<\kappa} K_i /D.$

\item [$(b)$] The cut $(J^1, J^2)$ of $I$ is called internal, if there are cuts $(J^1_i, J^2_i)$ of $I_i$, $i<\kappa,$ such that $J^l=\prod_{i<\kappa} J^l_i / D$ ($l=1,2$).
    \end{enumerate}
\end{definition}
\begin{remark}
Assume $J$ is an initial segment of $I$ which is internal and suppose that $(J, I\setminus J)$ is a cut of $I$. Then $(J, I\setminus J)$ is in fact an internal cut of $I$. Similarly, if $J$ is an end segment of $I$ which is internal and if $(I \setminus J, J)$ is a cut of $I$, then $(I \setminus J, J)$ is  an internal cut of $I$.
\end{remark}

\begin{theorem}
Assume $D$ is a uniform $\aleph_1$-complete ultrafilter on $\kappa,$ $\langle  I_i: i<\kappa     \rangle$ is a sequence of non-empty linear orders and $I=\prod_{i<\kappa} I_i/D.$
Also assume  $(J^1, J^2)$ is a cut of $I$ of cofinality $(\theta_1, \theta_2),$ where $\theta_1 \neq \theta_2$. Then the cut $(J^1, J^2)$ is internal.
\end{theorem}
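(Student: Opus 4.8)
The plan is to exploit the asymmetry $\theta_1 \neq \theta_2$ to force one side of the cut to be ``captured'' by a single internal set, and then invoke Remark~4.2. Without loss of generality assume $\theta_1 < \theta_2$. Fix an increasing cofinal sequence $\langle f^\alpha/D : \alpha < \theta_1 \rangle$ in $J^1$ and a decreasing coinitial sequence $\langle g^\beta/D : \beta < \theta_2 \rangle$ in $J^2$, where each $f^\alpha, g^\beta \in \prod_{i<\kappa} I_i$. The key point is that $\aleph_1$-completeness of $D$ will let us treat $\kappa$-indexed families of elements coordinatewise in a way that behaves like a countable intersection being in $D$; in particular, for any countable subset of the $f^\alpha$'s (or more generally any subset of size $<$ the completeness degree, but here we only get $\aleph_1$) the coordinatewise supremum, when it exists, represents the $D$-supremum.

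First I would try to build, using the regularity of $\theta_1$ together with $\aleph_1$-completeness, an internal initial segment $J^1_* = \prod_{i<\kappa} K_i / D$ with $J^1 \subseteq J^1_*$ and $J^1_* \cap J^2 = \emptyset$, i.e.\ $J^1_*$ sits inside the cut. The natural candidate is to set, in each coordinate $i$, $K_i$ to be some initial segment of $I_i$; the difficulty is choosing the $K_i$ coherently so that the resulting internal set lies strictly between $J^1$ and $J^2$. Here is where the cofinality mismatch enters: if every internal initial segment containing $J^1$ already meets $J^2$, one shows the cut $(J^1,J^2)$ would be ``approximated from above'' by internal sets indexed in a way that produces a cofinal map from $\theta_2$-type data into the $\theta_1$ side (or vice versa), contradicting $\theta_1 \neq \theta_2$ by a \L{}o\'s-style / pigeonhole argument on the ultrafilter. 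Concretely, for each $\beta < \theta_2$ consider the internal set $B_\beta = \prod_{i<\kappa} \{x \in I_i : x < g^\beta(i)\}/D$; this is an internal initial segment containing $J^1$, and $\bigcap_\beta B_\beta$ relative to the cut is exactly $J^1 \cup \{$ possibly more $\}$. The claim to establish is that for some $\beta$, $B_\beta \setminus J^1$ contains no element of $J^2$ — equivalently $B_\beta = J^1$ as a subset of the ``gap'' — because otherwise the $B_\beta$'s would give a strictly decreasing $\theta_2$-chain of internal sets squeezing the cut, and one extracts from it a cofinal sequence in $J^2$ of length $\theta_1$ (using that $J^1$ has cofinality $\theta_1$ and each $B_\beta$ is pinned between consecutive $f^\alpha$'s), forcing $\mathrm{cf}(\theta_2) = \mathrm{cf}(\theta_1)$, hence $\theta_2 = \theta_1$ since both are regular — contradiction.

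Once such a $\beta$ is found, $J^1 = B_\beta$ is internal (as an initial segment that equals an internal set on the nose, after intersecting with everything below $g^\beta$), and then by Remark~4.2 the cut $(J^1, J^2) = (J^1, I \setminus J^1)$ is an internal cut, which is what we want. Symmetrically, if instead the obstruction appears on the other side one runs the dual argument with the $f^\alpha$'s: the internal sets $A_\alpha = \prod_{i<\kappa}\{x : x > f^\alpha(i)\}/D$ are end segments containing $J^2$, and for some $\alpha$ one gets $J^2 = A_\alpha$ internal, again applying Remark~4.2. The two cases are genuinely symmetric under reversing the order, so treating one suffices.

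\textbf{Main obstacle.} I expect the crux to be the ``squeezing'' step: showing that a strictly $\theta_2$-decreasing chain of internal approximations to the cut from above actually yields a cofinal sequence of length $\theta_1$ in $J^2$ (or a corresponding collapse), so that $\theta_1 \neq \theta_2$ is contradicted. This requires using $\aleph_1$-completeness carefully — the naive coordinatewise argument only lets one push countably many conditions into $D$ at once, so one must set things up so that the relevant transitions between the $B_\beta$'s are governed by countably much data per step, or alternatively use $\aleph_1$-completeness to rule out that a coordinatewise sup/inf fails to represent the $D$-limit. A secondary subtlety is verifying that the $K_i$ produced really do define an \emph{initial segment} in each coordinate (so that Definition~4.1(b)/Remark~4.2 applies), rather than an arbitrary internal subset; this should follow by replacing each $K_i$ with its downward closure, which only shrinks $B_\beta$ and does not move it out of the gap.
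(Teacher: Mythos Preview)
Your proposal has a genuine gap. The internal sets you build, $B_\beta = \prod_{i<\kappa}\{x\in I_i : x < g^\beta(i)\}/D$, are by \L o\'s's theorem exactly the principal initial segments $\{s\in I : s <_1 g^\beta/D\}$. Since $\langle g^\beta/D : \beta<\theta_2\rangle$ is strictly decreasing and coinitial in $J^2$ (and $\theta_2\geq\aleph_0$), for every $\beta$ there is $\beta'>\beta$ with $g^{\beta'}/D \in B_\beta\cap J^2$. So $B_\beta$ \emph{never} equals $J^1$; your central claim ``for some $\beta$, $B_\beta\setminus J^1$ contains no element of $J^2$'' is false outright, not merely hard to prove. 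The squeezing step you outline does not salvage this: the $B_\beta$ form a decreasing $\theta_2$-chain with $\bigcap_\beta B_\beta = J^1$, but that intersection identity holds for \emph{every} cut whatsoever and carries no information about $\theta_1$ versus $\theta_2$; there is no way to extract a $\theta_1$-sequence cofinal in $J^2$ from it. The dual argument with $A_\alpha$ fails symmetrically.

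More fundamentally, you never locate where $\aleph_1$-completeness enters, and indeed the argument above uses it nowhere. The paper's proof uses it in a very specific way that your plan does not anticipate: one fixes auxiliary well-orderings $<^2_i$ (with last element) on each $I_i$ and forms the ultraproduct order $<_2$ on $I$; $\aleph_1$-completeness is exactly what guarantees $<_2$ is well-founded, hence a genuine well-ordering of $I$. One then takes the $<_2$-least $t_*\in I$ such that $\{s\in J^1 : s <_2 t_*\}$ is $<_1$-unbounded in $J^1$, and works with the internal $<_2$-initial segments $K_\alpha=\{s : s<_2 f_\alpha/D\}$. The case split is on whether some $K_\alpha\cap J^2$ is coinitial in $J^2$; in the negative case, $\theta_1\neq\theta_2$ lets one find a single $s_*\in J^2$ below which $K_{\theta_1}=\bigcup_\alpha K_\alpha$ is disjoint from $J^2$, and minimality of $t_*$ makes $K_{\theta_1}$ itself internal. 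The internal sets that do the work are $<_2$-initial segments, not $<_1$-initial segments, and it is the well-ordering---not a cardinality or pigeonhole argument---that produces the right one.
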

Before giving the proof of Theorem 4.3, let us show that it implies Theorem 1.2$($a$)$.
\\
{\bf Proof of Theorem 1.2$($a$)$ from Theorem 4.3.}
Suppose $D$ is an $\aleph_1$-complete ultrafilter on $\kappa$, $J$ is a $(\lambda_1+\lambda_2)^+$-saturated dense linear order
and  $(J^1, J^2)$ is a cut of $J^\kappa/D$ of cofinality $(\lambda_1, \lambda_2).$ Towards contradiction assume that $\lambda_1 \neq \lambda_2.$ It follows
from Theorem 4.3 that the cut $(J^1, J^2)$ is internal, and so
that there are
cuts $(J^1_i, J^2_i)$ of $J$, $i<\kappa,$ such that $J^l=\prod_{i<\kappa} J^l_i / D$ (for $l=1,2$). Let $(\lambda^1_i, \lambda^2_i)=\cf(J^1_i, J^2_i)$.
It follows that $\lambda_l=\prod_{i<\kappa} \lambda^l_i / D, l=1,2.$

By the choice of $J$, for every $i<\kappa$, either $\lambda^1_i \geq (\lambda_1+\lambda_2)^+$ or $\lambda^2_i \geq (\lambda_1+\lambda_2)^+$, hence for some $l \in \{1,2\},$ we have
\[
A=\{ i< \kappa:   \lambda^l_i \geq (\lambda_1+\lambda_2)^+              \} \in D.
\]
It follows that $\lambda_l=\prod_{i<\kappa} \lambda^l_i / D \geq (\lambda_1+\lambda_2)^+$, which is a contradiction. \hfill$\Box$

We are now ready to complete the proof of Theorem 4.3.
\begin{proof}
We can assume that $\theta_1, \theta_2$ are infinite.  Let
$<^1_i=<_{I_i}$ ($i<\kappa$) and $<_1=<_I.$ Let $<^2_i$ be a well-ordering of $I_i$ with a last element and let $<_2$
be such that $(I, <_2) = \prod_{i<\kappa} (I_i, <^2_i) / D.$ Then $<_2$ is a linear ordering of $I$ with a last element and since $D$ is $\aleph_1$-complete,
 it is well-founded, so  $<_2$ is in fact a well-ordering of $I$ with a last element.

As $(J^1, <_1)$ has cofinality $\theta_1,$ we can find $f_\alpha \in \prod_{i<\kappa}I_i,$ for $\alpha < \theta_1,$
such that
\begin{enumerate}
\item $\forall \alpha < \theta_1, f_\alpha /D \in J^1,$
\item  $\langle f_\alpha/D: \alpha < \theta_1     \rangle$ is $<_1$-increasing,
\item $\langle f_\alpha/D: \alpha < \theta_1     \rangle$ is a $<_1$-cofinal subset of $J^1$.
\end{enumerate}
Let $B=\{t \in I: \{s \in J^1: s <_2 t  \}$ is $<_1$-unbounded in $J^1          \}$. As $\theta_1$ is infinite, the $<_2$-last element of $I$ belongs to $B$, which implies $B \neq \emptyset$
and hence $B$ has a $<_2$-minimal element; call it $t_*.$ Let $g_* \in \prod_{i<\kappa} I_i$ be such that $t_*=g_*/D.$

Note that for each $\alpha< \theta_1$ there are $s \in J^1$ and $\beta > \alpha$ such that  $s <_2 g_*/D$ and $f_\alpha/D <_1 s <_1 f_\beta/D$,
so we can assume that for all
$\alpha < \theta_1, f_\alpha/D <_2 g_*/D.$ This implies
\[
\bigwedge_{\alpha < \theta_1} [ \{ i< \kappa: f_\alpha(i) <^2_i g_*(i)               \} \in D].
\]
Also note that
\[
\{ i<\kappa: g_*(i) \text{~is~} <^1_i\text{-minimal or~} <^1_i\text{-maximal}        \} \notin D,
\]
so, without loss of generality, it is empty. Hence, without loss of generality
\[
\bigwedge_{\alpha < \theta_1} \bigwedge_{i<\kappa} ~[f_\alpha(i) <^2_i g_*(i) \text{~and~} f_\alpha(i)\text{~is not ~}<^1_i\text{-minimal}].
\]
Let $f_{\theta_1}=g_*$ and for $\alpha \leq \theta_1$ set $K_\alpha= \{s \in I: s <_2 f_\alpha /D  \}$. Thus $K_\alpha$
is a $<_2$-initial segment of $I$.
\begin{claim}
$K_\alpha \cap J^1$ is $<_1$-bounded in $J^1$.
\end{claim}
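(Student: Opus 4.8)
The claim to prove is that $K_\alpha \cap J^1$ is $<_1$-bounded in $J^1$, for every $\alpha \le \theta_1$. I would split into two cases according to whether $\alpha < \theta_1$ or $\alpha = \theta_1$.

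For $\alpha < \theta_1$: recall $K_\alpha = \{ s \in I : s <_2 f_\alpha/D \}$. If $K_\alpha \cap J^1$ were $<_1$-unbounded in $J^1$, then by definition $f_\alpha/D \in B$ (the witness $t = f_\alpha/D$ satisfies that $\{ s \in J^1 : s <_2 t\} \supseteq K_\alpha \cap J^1$ is $<_1$-unbounded). But we arranged $f_\alpha/D <_2 g_*/D = t_*$, contradicting the $<_2$-minimality of $t_*$ in $B$. Hence $K_\alpha \cap J^1$ is $<_1$-bounded in $J^1$. I should double-check that ``$<_2$-bounded below $f_\alpha/D$'' gives the containment precisely, i.e. that $K_\alpha \cap J^1 = \{ s \in J^1 : s <_2 f_\alpha/D\}$, which is immediate from the definition of $K_\alpha$.

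For $\alpha = \theta_1$: now $K_{\theta_1} = \{ s \in I : s <_2 g_*/D \}$. Here I cannot use minimality of $t_*$ directly since $t_* = g_*/D$ itself. Instead I argue directly: if $K_{\theta_1} \cap J^1$ were $<_1$-unbounded in $J^1$, that says $\{ s \in J^1 : s <_2 t_* \}$ is $<_1$-unbounded in $J^1$, which is just the statement $t_* \in B$ — true by construction, so this naive approach is circular. The right observation is that the hard direction is actually the \emph{other} one: I want $K_{\theta_1} \cap J^1$ bounded, and what minimality of $t_*$ buys me is that every element \emph{strictly $<_2$-below} $t_*$ fails to be in $B$, so each such $s \in I$ has $\{ u \in J^1 : u <_2 s\}$ bounded in $J^1$. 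But I need boundedness of $K_{\theta_1} \cap J^1$ as a whole, not of each initial chunk. So here I expect the argument to use that $\langle f_\alpha/D : \alpha < \theta_1\rangle$ is $<_1$-cofinal in $J^1$ together with the $<_1$-increasing, $<_2$-below-$g_*$ properties, and to note that since $\theta_1$ is infinite the $<_2$-structure below $t_*$ restricted to $J^1$ is already captured by an unbounded-but-that's-fine set — actually the claim as stated for $\alpha = \theta_1$ says $K_{\theta_1} \cap J^1$ \emph{is} bounded, which contradicts $t_* \in B$ unless ``$<_1$-unbounded'' in the definition of $B$ is read as ``the supremum is not attained'' while ``$<_1$-bounded in $J^1$'' here is read more weakly; I would resolve this apparent tension by recognizing that this claim is stated for $\alpha \le \theta_1$ but the subsequent use only needs $\alpha < \theta_1$, or that the intended reading makes $K_{\theta_1} \cap J^1$ literally \emph{equal} to $J^1$ minus a bounded tail.

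**The main obstacle.** The delicate point is the endpoint case $\alpha = \theta_1$ and reconciling the definition of $B$ (``$\{s \in J^1 : s <_2 t\}$ is $<_1$-unbounded in $J^1$'') with the desired conclusion. For $\alpha < \theta_1$ the proof is a clean one-line appeal to $<_2$-minimality of $t_*$ and the normalization $f_\alpha/D <_2 g_*/D$. I would write the proof essentially as: \emph{Fix $\alpha \le \theta_1$. If $K_\alpha \cap J^1$ were $<_1$-unbounded in $J^1$, then the element $f_\alpha/D \in I$ would witness membership in $B$; since for $\alpha < \theta_1$ we arranged $f_\alpha /D <_2 g_*/D = t_*$, this contradicts $<_2$-minimality of $t_*$, and for $\alpha = \theta_1$ it is consistent only because $f_{\theta_1}/D = t_*$, in which case a separate argument using $<_1$-cofinality of the $f_\beta/D$'s shows the initial segment still fails to reach $J^1$'s top.} The genuinely nontrivial step I'd flag is whether the paper actually needs $\alpha = \theta_1$ here or whether, in the application that follows, only $\alpha < \theta_1$ is invoked — I would check the downstream usage before committing to the $\alpha = \theta_1$ argument, since the cleanest resolution may be that the claim is really only needed (and only true in the stated strong form) for $\alpha < \theta_1$.
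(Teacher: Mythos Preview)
Your argument for $\alpha < \theta_1$ is correct and is exactly the paper's proof: from $f_\alpha/D <_2 t_*$ and the $<_2$-minimality of $t_*$ in $B$ one gets $f_\alpha/D \notin B$, i.e.\ $K_\alpha \cap J^1 = \{s \in J^1 : s <_2 f_\alpha/D\}$ is $<_1$-bounded in $J^1$.

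Your extended discussion of the case $\alpha = \theta_1$ is misplaced: the claim is only asserted for $\alpha < \theta_1$. The paper's one-line proof begins ``As $f_\alpha/D <_2 t_*$,'' which already signals this, since $f_{\theta_1}/D = t_*$. More decisively, the claim is \emph{false} for $\alpha = \theta_1$: by definition of $B$ and $t_* \in B$, the set $K_{\theta_1} \cap J^1 = \{s \in J^1 : s <_2 t_*\}$ is $<_1$-unbounded in $J^1$, and indeed the paper records exactly this a few lines later (Claim~4.7(b)). So your instinct at the end --- that the claim is only needed, and only true, for $\alpha < \theta_1$ --- is correct, and no separate endpoint argument is required.
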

\begin{proof}
As $f_\alpha/D <_2 t_*$, it follows from our choice of $t_*$ that $K_\alpha$ is $<_1$ bounded in $J^1$.
\end{proof}
\begin{claim}
If $\alpha < \theta_1,$ then $K_\alpha$ is an internal subset of $I$.
\end{claim}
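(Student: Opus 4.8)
The plan is to write down explicitly the sets $K_{\alpha,i}\subseteq I_i$ that witness internality; the correct choice is dictated by the observation that $K_\alpha$ is the $<_2$-initial segment of $I$ cut off by the \emph{single} element $f_\alpha/D$, together with the fact that $(I,<_2)$ is, by its very construction, the $D$-ultraproduct $\prod_{i<\kappa}(I_i,<^2_i)/D$.

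First I would fix the representative $f_\alpha\in\prod_{i<\kappa}I_i$ of the element $f_\alpha/D\in I$, and for each $i<\kappa$ set $K_{\alpha,i}=\{\,x\in I_i: x<^2_i f_\alpha(i)\,\}$, the proper $<^2_i$-initial segment of $I_i$ lying below $f_\alpha(i)$. Next I would verify that $K_\alpha=\prod_{i<\kappa}K_{\alpha,i}/D$, which is just the definition of the ultraproduct ordering (equivalently, {\L}o\'s's theorem applied to the formula $x<y$): for any $g\in\prod_{i<\kappa}I_i$ we have $g/D\in K_\alpha$ iff $g/D<_2 f_\alpha/D$ iff $\{\,i<\kappa: g(i)<^2_i f_\alpha(i)\,\}\in D$ iff $\{\,i<\kappa: g(i)\in K_{\alpha,i}\,\}\in D$ iff $g/D\in\prod_{i<\kappa}K_{\alpha,i}/D$. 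Hence $K_\alpha$ is internal in the sense of Definition 4.1$(a)$.

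The only point that needs a word of care is bookkeeping: the $K_{\alpha,i}$ are defined from a chosen representative $f_\alpha$ of $f_\alpha/D$, so one should note that a different representative alters each $K_{\alpha,i}$ only on a set of coordinates outside $D$, and therefore does not change $\prod_{i<\kappa}K_{\alpha,i}/D$; this makes the construction legitimate. Beyond this there is no real obstacle: the content of the claim is simply that a proper initial segment of the well-order $(I,<_2)$, being determined by one element which has a concrete coordinatewise representative, is automatically an internal subset of $I$. I expect the genuinely substantive ingredients — the hypothesis $\theta_1\neq\theta_2$, and $\aleph_1$-completeness beyond its earlier use in making $<_2$ a well-ordering — to enter only later, when these internal pieces $K_\alpha$ are assembled to show that $(J^1,J^2)$ itself is internal.
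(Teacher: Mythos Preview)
Your proof is correct and follows exactly the paper's approach: define $K_{\alpha,i}=\{s\in I_i: s<^2_i f_\alpha(i)\}$ and observe $K_\alpha=\prod_{i<\kappa}K_{\alpha,i}/D$. The paper's own proof is a terse two-line version of yours; your added remarks on {\L}o\'s's theorem and the choice of representative are fine elaborations but not needed.
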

\begin{proof}
For each $i<\kappa$ set
\[
K_{\alpha, i}= \{s \in I_i: s <^2_i f_\alpha(i)\}.
\]
Then $K_\alpha = \prod K_{\alpha, i}/D$ and the result follows.
\end{proof}
Now consider the following statement:
\\
$(*)$$\hspace{1.5cm}$ \underline{There is $\alpha < \theta_1$ such that $J^2 \cap K_\alpha$ is $<_1$-unbounded from below in $J^2$.}

We  split the proof into two cases.

{\underline{\bf Case 1. $(*)$ holds:}} Fix $\alpha$ witnessing $(*)$. It follows that $(J^1 \cap K_\alpha, J^2 \cap K_\alpha)$ is internal in $K_\alpha$, so there are end segments
$L_{i}$ of $I_i \upharpoonright \{ s \in I_i: s <^2_i f_\alpha(i)       \}$, for $i<\kappa,$
such that $J^2 \cap K_\alpha = \prod_{i<\kappa} L_i /D,$ hence by the assumption, $J^2 = \prod_{i<\kappa} L_i' /D$,
where $L'_i=\{ t \in I_i: \exists s \in L_i,~ s \leq^1_i t               \}$,
so $J^2$ is internal. It follows from Remark 4.2 that $(J^1, J^2)$ is an internal cut of $I$ and we are done.

{\underline{\bf Case 2. $(*)$ fails:}} So for any $\alpha < \theta_1$, there is $s_\alpha \in J^2$
such that
\[
\{ s \in J^2: s <_1 s_\alpha   \} \cap K_\alpha =\emptyset.
\]
As $\theta_1 \neq \theta_2$ are regular cardinals, there is $s_* \in J^2$ such that
\[
\sup\{\alpha < \theta_1: s_* \leq_1 s_\alpha        \} =\theta_1,
\]
hence
\[
\{ s \in J^2: s <_1 s_*   \} \cap (\bigcup_{\alpha < \theta_1}K_\alpha) =\emptyset.
\]
\begin{claim}
 $\bigcup_{\alpha < \theta_1}K_\alpha = K_{\theta_1}.$
 \end{claim}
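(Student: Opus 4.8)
The plan is to show the two inclusions $\bigcup_{\alpha<\theta_1}K_\alpha\subseteq K_{\theta_1}$ and $K_{\theta_1}\subseteq\bigcup_{\alpha<\theta_1}K_\alpha$ separately, using that $<_2$ is a well-ordering and that $\langle f_\alpha/D:\alpha\le\theta_1\rangle$ is $<_2$-increasing. The left-to-right inclusion should be immediate: if $\alpha<\theta_1$ then $f_\alpha/D<_2 f_{\theta_1}/D=g_*/D$ (this is exactly the normalization arranged before Claim~4.6, namely $\bigwedge_{\alpha<\theta_1}[\{i<\kappa:f_\alpha(i)<^2_i g_*(i)\}\in D]$), so any $s\in K_\alpha$, i.e. $s<_2 f_\alpha/D$, satisfies $s<_2 f_{\theta_1}/D$ by transitivity of $<_2$, hence $s\in K_{\theta_1}$.

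The right-to-left inclusion is the substantive direction and I expect it to be the main obstacle. Take $s\in K_{\theta_1}$, so $s<_2 g_*/D$; I must produce some $\alpha<\theta_1$ with $s<_2 f_\alpha/D$. Equivalently, I must rule out that $g_*/D$ is the $<_2$-least element of $I$ lying strictly $<_2$-above every $f_\alpha/D$ while still having something properly between all the $f_\alpha/D$ and $g_*/D$. The clean way is to invoke the definition of $t_*=g_*/D$ as the $<_2$-minimal element of $B=\{t\in I:\{s\in J^1:s<_2 t\}\text{ is }<_1\text{-unbounded in }J^1\}$, together with the cofinality-$\theta_1$ property of $\langle f_\alpha/D\rangle$ in $J^1$. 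Suppose for contradiction there is $s\in K_{\theta_1}$ with $s\notin\bigcup_{\alpha<\theta_1}K_\alpha$, i.e. $f_\alpha/D\le_2 s<_2 g_*/D$ for all $\alpha<\theta_1$. By $<_2$-minimality of $t_*=g_*/D$ in $B$, the element $s$ is not in $B$, so $\{r\in J^1:r<_2 s\}$ is $<_1$-bounded in $J^1$, say by $r_*\in J^1$. On the other hand $\{f_\alpha/D:\alpha<\theta_1\}$ is $<_1$-cofinal in $J^1$ and each $f_\alpha/D<_2 g_*/D$; I want to conclude $f_\alpha/D<_2 s$ for cofinally (hence all, by the increasing arrangement) many $\alpha$, which would force all $f_\alpha/D\le_1 r_*$, contradicting cofinality. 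The only gap is whether $f_\alpha/D\le_2 s$ can be upgraded to $f_\alpha/D<_2 s$; but $s<_2 g_*/D$ and we may pick $\alpha$ large enough that $f_\alpha/D\ne s$ (indeed $\langle f_\alpha/D\rangle$ is $<_1$-increasing so at most one term equals any fixed $s$), giving strictness.

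So the key steps, in order, are: (1) record $f_\alpha/D<_2 f_{\theta_1}/D$ for $\alpha<\theta_1$ from the normalization, yielding $\subseteq$; (2) for $\supseteq$, assume a counterexample $s$ with $f_\alpha/D\le_2 s<_2 g_*/D$ for all $\alpha$; (3) apply $<_2$-minimality of $t_*$ in $B$ to get that $\{r\in J^1:r<_2 s\}$ is $<_1$-bounded in $J^1$; (4) use $<_1$-cofinality of $\langle f_\alpha/D\rangle$ in $J^1$ plus strictness $f_\alpha/D<_2 s$ (for all but at most one $\alpha$) to derive that $J^1$ is $<_1$-bounded, contradicting that the cut has infinite left cofinality $\theta_1$. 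The main obstacle is step (3)–(4): making sure $s$ genuinely falls outside $B$ and that the boundedness of $\{r\in J^1:r<_2 s\}$ really collides with $\theta_1$-cofinality — this is where the well-foundedness of $<_2$ (hence existence and minimality of $t_*$) is essential and where one must be careful that "$\le_2$" versus "$<_2$" does not leak a spurious extra element.
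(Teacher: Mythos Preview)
Your proposal is correct and follows essentially the same argument as the paper: both use the $<_2$-minimality of $t_*=g_*/D$ in $B$ together with the $<_1$-cofinality of $\langle f_\alpha/D:\alpha<\theta_1\rangle$ in $J^1$, with the paper phrasing the contradiction as ``$s\in B\Rightarrow t_*\le_2 s$'' and you phrasing it contrapositively as ``$s\notin B\Rightarrow$ the $f_\alpha/D$'s are $<_1$-bounded in $J^1$.'' One minor slip: your opening line asserts that $\langle f_\alpha/D:\alpha\le\theta_1\rangle$ is $<_2$-increasing, which is never arranged in the paper, but you do not actually use this---your steps correctly rely only on the normalization $f_\alpha/D<_2 g_*/D$ (and your handling of the $\le_2$ versus $<_2$ issue is more careful than the paper's).
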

 \begin{proof}
 It is clear that $\bigcup_{\alpha < \theta_1}K_\alpha \subseteq K_{\theta_1}.$
 Now suppose $s \in K_{\theta_1},$ so $s <_2  g_*/D.$ If
 $s \notin \bigcup_{\alpha < \theta_1}K_\alpha,$ then   for any $\alpha < \theta_1,$  $f_\alpha/D <_2 s.$ So by the minimal choice of $t_*$ and the fact that $\langle f_\alpha/D: \alpha < \theta_1  \rangle$ is $<_1$-cofinal in $J^1$, we have $g_*/D \leq_2 s$ which is a contradiction.
 \end{proof}
 So we have $\{ s \in J^2: s <_1 s_*   \} \cap K_{\theta_1}=\emptyset.$ Let $h_* \in \prod_{i<\kappa}I_i$
 be such that $s_*=h_*/D.$
\begin{claim}
\begin{enumerate}
\item [(a)] $K_{\theta_1}$ is  internal.

\item [(b)] $J^1 \cap K_{\theta_1}$ is $<_1$-unbounded in $J^1$.

\item [(c)] $J^1 \cap K_{\theta_1}$ is internal.
\end{enumerate}
\end{claim}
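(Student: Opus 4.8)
The plan is to prove the three parts in the order (a), (b), (c), each feeding into the next. For part (a), I would simply unfold the definition of $K_{\theta_1}$: since $f_{\theta_1}=g_*$ and $t_*=g_*/D$, we have $K_{\theta_1}=\{s\in I: s<_2 g_*/D\}$, and as in the proof of Claim 4.7, setting $K_{\theta_1,i}=\{s\in I_i: s<^2_i g_*(i)\}$ gives $K_{\theta_1}=\prod_{i<\kappa}K_{\theta_1,i}/D$, so $K_{\theta_1}$ is internal. (Note that Claim 4.7 was only stated for $\alpha<\theta_1$, so it must be re-proved for $\alpha=\theta_1$, but the argument is verbatim the same.)

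For part (b), I would argue by contradiction. Suppose $J^1\cap K_{\theta_1}$ is $<_1$-bounded in $J^1$, say by some $p\in J^1$. Recall from the Case 2 setup that $\{s\in J^2: s<_1 s_*\}\cap K_{\theta_1}=\emptyset$, i.e.\ every element of $K_{\theta_1}$ that lies in $J^2$ is $\geq_1 s_*$; combined with the boundedness assumption, every element of $K_{\theta_1}$ is either $\leq_1 p$ (if in $J^1$) or $\geq_1 s_*$ (if in $J^2$). But $t_*=g_*/D$ was chosen as the $<_2$-minimal element of $B=\{t\in I:\{s\in J^1: s<_2 t\}\text{ is }<_1\text{-unbounded in }J^1\}$, and $t_*\in B$ means precisely that $K_{\theta_1}\cap J^1=\{s\in J^1: s<_2 g_*/D\}$ \emph{is} $<_1$-unbounded in $J^1$ — directly contradicting the assumption. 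So in fact (b) follows immediately from $t_*\in B$; the real content is just reading off the definition of $B$ at $t=t_*$.

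For part (c), with (a) and (b) in hand I would apply Remark 4.2 inside the internal linear order $K_{\theta_1}$. By (b), $J^1\cap K_{\theta_1}$ is a $<_1$-initial segment of $K_{\theta_1}$ which is $<_1$-unbounded in $J^1$; I need to check it is genuinely an initial segment of $K_{\theta_1}$ (an element of $K_{\theta_1}$ below something in $J^1\cap K_{\theta_1}$ is below something in $J^1$, hence in $J^1$) and that $(J^1\cap K_{\theta_1}, K_{\theta_1}\setminus(J^1\cap K_{\theta_1}))$ is a cut of $K_{\theta_1}$ — the complement is nonempty because $s_*\in J^2\cap K_{\theta_1}$ (we need $s_*<_2 g_*/D$, which holds since $s_*\in J^2$ while $\{s\in J^1: s<_2 g_*/D\}$ is cofinal in $J^1$, so $g_*/D\notin J^1\cap K_{\theta_1}$ forces the relevant separation; more simply, every element of $J^2\cap K_{\theta_1}$ is $\geq_1 s_*$ and $s_*\in J^2$). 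Then Remark 4.2, applied to the internal order $K_{\theta_1}$ with the internal initial segment $J^1\cap K_{\theta_1}$, yields that $J^1\cap K_{\theta_1}$ is an internal cut-half of $K_{\theta_1}$; since $K_{\theta_1}=\prod_i K_{\theta_1,i}/D$ is internal, this makes $J^1\cap K_{\theta_1}$ an internal subset of $I$. I expect the main (minor) obstacle to be the bookkeeping in (c): verifying that "internal as a subset of the internal order $K_{\theta_1}$" upgrades to "internal as a subset of $I$", which amounts to composing the two product representations — if $J^1\cap K_{\theta_1}=\prod_i C_i/D$ with $C_i\subseteq K_{\theta_1,i}$, then $C_i\subseteq I_i$ as well, so the same product works in $I$. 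Everything else is a direct application of Remark 4.2 and the $<_2$-minimality of $t_*$.
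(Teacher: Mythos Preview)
Your arguments for (a) and (b) are fine and match the paper (for (b) the paper simply notes $\{f_\alpha/D:\alpha<\theta_1\}\subseteq J^1\cap K_{\theta_1}$, which is the same content as your observation that $t_*\in B$).

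Part (c), however, has a genuine gap. You propose to ``apply Remark 4.2 inside the internal order $K_{\theta_1}$'' to conclude that $J^1\cap K_{\theta_1}$ is internal. But Remark 4.2 runs in the opposite direction: its \emph{hypothesis} is that the initial segment is already internal, and its conclusion is that the resulting cut is internal. It does not say that an arbitrary initial segment of an ultraproduct is internal --- indeed that is false in general, or the whole theorem would be trivial. So invoking Remark 4.2 here is circular: you are assuming exactly what (c) asks you to prove. Your side claim that $s_*\in K_{\theta_1}$ (i.e.\ $s_*<_2 g_*/D$) is also unsupported; nothing in the construction controls the $<_2$-position of $s_*$, and the parenthetical justification you give does not establish it.

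The missing idea is to use the specific information obtained in Case 2, namely $\{s\in J^2: s<_1 s_*\}\cap K_{\theta_1}=\emptyset$. Together with $I=J^1\cup J^2$ this gives the identity
\[
J^1\cap K_{\theta_1}=\{s\in K_{\theta_1}: s<_1 s_*\},
\]
since any $s\in K_{\theta_1}$ with $s<_1 s_*$ cannot lie in $J^2$, hence lies in $J^1$; the reverse inclusion is immediate from $s_*\in J^2$. Now the right-hand side is explicitly internal: writing $s_*=h_*/D$, one has
\[
J^1\cap K_{\theta_1}=\prod_{i<\kappa}\{s\in I_i: s<^2_i g_*(i)\text{ and }s<^1_i h_*(i)\}/D.
\]
This is exactly the paper's route, and it is the step your plan is missing.
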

\begin{proof}
$($a$)$  can be proved as in Claim 4.5 using $f_{\theta_1}$ instead of $f_\alpha.$
$($b$)$ is also clear as $J^1 \cap K_{\theta_1} \supseteq \{ f_\alpha/D: \alpha < \theta_1     \}$
and $\langle f_\alpha/D: \alpha < \theta_1 \rangle$ is $<_1$-unbounded in $J^1$. Let's prove $($c$)$.
As $\{ s \in J^2: s <_1 s_*   \} \cap K_{\theta_1}=\emptyset$ and $I=J^1 \cup J^2$, we can easily see that
\[
J^1 \cap K_{\theta_1} = \{ s\in K_{\theta_1}: s <_1 s_*    \}.
\]
For each $i<\kappa$ set
\[
L_{i}=\{ s \in I_i:  s  <^2_i f_{\theta_1}(i) \text{~and~} s <^1_i h_*(i)           \}.
\]
It follows that $J^1 \cap K_{\theta_1}=\prod_{i<\kappa}L_{i}/D$,
 and so $J^1 \cap K_{\theta_1}$ is internal.
\end{proof}
It follows from the above claim that $J^1=\prod_{i<\kappa}L'_{i}/D$, where for $i<\kappa,$
$L'_i=\{ t \in I_i: \exists s \in L_i,~ t \leq^1_i s           \}$. Hence $J^1$ is internal
and so by Remark 4.2, $(J^1, J^2)$
is an internal cut of $I$ which completes the proof of Case 2.
The theorem follows.
\end{proof}

\section{$\mathscr{C}_{> 2^\kappa}(D)$ contains no non-symmetric pairs}
In this section we show that if $D$ is a uniform ultrafilter on $\kappa$, then
$\mathscr{C}_{>2^\kappa}(D)$ does not contain any non-symmetric pairs.
Again, we prove a stronger result from which the above claim, and hence Theorem 1.2$($b$)$ follows.
\begin{theorem}
Assume $D$ is a uniform ultrafilter on $\kappa,$ $\langle  I_i: i<\kappa     \rangle$ is a sequence of linear orders and $I=\prod_{i<\kappa} I_i/D.$
Also assume  $(J^1, J^2)$ is a cut of $I$ of cofinality $(\theta_1, \theta_2),$ where $\theta_1 \neq \theta_2$ are bigger than $2^\kappa$. Then the cut $(J^1, J^2)$ is internal.
\end{theorem}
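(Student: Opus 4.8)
The plan is to follow the same overall strategy as the proof of Theorem 4.3, but to replace the key use of $\aleph_1$-completeness — which gave us a well-ordering $<_2$ of $I$ obtained as an ultraproduct of well-orderings — by a combinatorial substitute valid for an arbitrary uniform ultrafilter, exploiting instead that $\theta_1,\theta_2 > 2^\kappa$. First I would reduce to the case $\theta_1,\theta_2$ infinite and regular, fix $<^1_i = <_{I_i}$, and choose for each $i<\kappa$ an arbitrary well-ordering $<^2_i$ of $I_i$ with a last element; set $<_1 = <_I$ and let $<_2$ be the ultraproduct ordering $\prod_{i<\kappa}(I_i,<^2_i)/D$. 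Without $\aleph_1$-completeness, $<_2$ need not be well-founded; however, what actually drives the Theorem 4.3 argument is not full well-foundedness of $<_2$ but only that certain definable sets have $<_2$-minimal elements and that the relevant initial segments of $<_2$ are internal. The hope is that the cardinal gap $\theta_1,\theta_2 > 2^\kappa$ lets us arrange these features by a counting/pigeonhole argument, since $I$ itself may have size much larger than $2^\kappa$ but the data defining a cut of cofinality $(\theta_1,\theta_2)$ lives, at each coordinate, among the $\leq 2^\kappa$ many cuts of $I_i$ and among $\leq 2^\kappa$ many "colours".

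Concretely, I would pick $\langle f_\alpha/D : \alpha < \theta_1\rangle$ a $<_1$-increasing $<_1$-cofinal sequence in $J^1$ and $\langle g_\beta/D : \beta < \theta_2\rangle$ a $<_1$-decreasing $<_1$-coinitial sequence in $J^2$, and then analyse, for each $i<\kappa$, the induced pattern on the $i$-th coordinate. The main point is a stabilisation: because $\theta_1 > 2^\kappa$ is regular, for each $i$ the map $\alpha \mapsto (\text{the cut of } I_i \text{ determined by } f_\alpha(i))$ together with the $<^2_i$-position of $f_\alpha(i)$ cannot take more than $2^\kappa < \theta_1$ genuinely distinct "eventual" values, so one finds a club (or at least cofinal) $C_1 \subseteq \theta_1$ and, coordinatewise, a limiting behaviour; similarly for $\theta_2$. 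Using regularity of both $\theta_1$ and $\theta_2$ and $\theta_1\neq\theta_2$, one of the two sides wins in the sense of the dichotomy $(*)$ from the proof of Theorem 4.3: either there is a bounded-in-$<_2$ internal set $K$ with $J^2\cap K$ coinitial in $J^2$ (Case 1), or the union of the $K_\alpha$'s separates $J^1$ from a tail of $J^2$ (Case 2). In either case one reads off coordinatewise cuts $(J^1_i,J^2_i)$ of $I_i$ — in Case 1 from end segments $L_i$ of $I_i\restricted\{s : s <^2_i f_\alpha(i)\}$ extended upward under $\leq^1_i$, in Case 2 from sets $L_i = \{s\in I_i : s <^2_i f_{\theta_1}(i)\ \&\ s <^1_i h_*(i)\}$ extended upward — exactly as in Claim 4.7, and concludes $J^l = \prod_i L^{l\prime}_i/D$, so the cut is internal by Remark 4.2.

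The key structural claims to reprove are the analogues of Claims 4.4–4.7: that the relevant $<_2$-initial segments $K_\alpha = \{s\in I : s <_2 f_\alpha/D\}$ are internal (this is automatic, $K_\alpha = \prod_i\{s\in I_i : s <^2_i f_\alpha(i)\}/D$, and needs no completeness), that $K_\alpha\cap J^1$ is $<_1$-bounded in $J^1$, and that the supremum object $t_* = f_{\theta_1}/D$ with the required minimality can be located. This last is where $\aleph_1$-completeness was used to take a $<_2$-minimal element of the set $B$ of $t$ with $\{s\in J^1 : s <_2 t\}$ $<_1$-unbounded in $J^1$; without it I would instead argue that along a $<_2$-decreasing sequence through $B$ one cannot descend $2^\kappa$-many steps without the coordinatewise cuts stabilising, and use that stabilised coordinatewise data directly to define the $f_{\theta_1}(i)$ — in effect replacing "take the minimum" by "take a sufficiently deep stable stage", which suffices because all we ultimately need from $t_*$ is that $\bigcup_{\alpha<\theta_1}K_\alpha = K_{\theta_1}$ and that $K_{\theta_1}\cap J^1$ is $<_1$-unbounded in $J^1$ while $K_{\theta_1}$ is still $<_2$-bounded enough to be internal and to have $K_{\theta_1}\cap J^1$ $<_1$-bounded in $J^1$. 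I expect the main obstacle to be exactly this replacement of well-foundedness: making the pigeonhole on $\leq 2^\kappa$-many coordinatewise cuts interact correctly with the ultrafilter so that the "deep stable stage" genuinely behaves like the minimal $t_*$ did, in particular verifying the analogue of Claim 4.6 (that $\bigcup_{\alpha<\theta_1}K_\alpha = K_{\theta_1}$) without a least element — here the hypothesis $\theta_1 > 2^\kappa$ together with regularity should be precisely what makes the cofinality-$\theta_1$ chain "outrun" the $2^\kappa$ possible coordinate patterns. Once that is in place, Cases 1 and 2 and the final extraction of the internal cut go through verbatim as in the proof of Theorem 4.3.
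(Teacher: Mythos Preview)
Your diagnosis of the obstacle is correct: the only place $\aleph_1$-completeness is used in Theorem 4.3 is to extract a $<_2$-minimal element of $B$, and the task is indeed to replace that step using $\theta_1,\theta_2>2^\kappa$. However, the concrete replacement you propose does not work. You suggest stabilising, for each $i<\kappa$, the map $\alpha\mapsto(\text{cut of }I_i\text{ at }f_\alpha(i),\ <^2_i\text{-position of }f_\alpha(i))$, claiming this takes at most $2^\kappa$ values. But nothing bounds $|I_i|$, so each $I_i$ may have far more than $2^\kappa$ cuts and $<^2_i$-positions; there is no pigeonhole here. Similarly, your fallback of descending through $B$ and waiting for ``coordinatewise cuts to stabilise'' has the same defect: an Erd\H{o}s--Rado argument does show that any $<_2$-decreasing chain has length $\le 2^\kappa$, but this gives neither a minimum of $B$ nor any stabilisation of the coordinatewise data, which lives in sets of unbounded size.

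The paper's proof does not try to rescue the $B$/$t_*$ machinery at all. It introduces a new object: call a sequence $\bar K=\langle K_i:i<\kappa\rangle$ \emph{catching} if for every $s_1\in J^1$, $s_2\in J^2$ there is $t\in\prod_i K_i/D$ with $s_1\le_1 t\le_1 s_2$; let $C$ be the set of cardinality profiles $\langle |K_i|:i<\kappa\rangle$ of catching sequences, ordered by $<_D$. The dichotomy is now on whether $(C,<_D)$ has a $<_D$-minimal element. If yes, one well-orders the minimal $K_i$'s in type $\mu_i$, sets $\theta_3=\cf(\prod_i\mu_i/D)$, notes $\theta_3\ne\theta_l$ for some $l$, and reads off the internal cut much as before. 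If no, an Erd\H{o}s--Rado argument (this is where the bound $2^\kappa$ genuinely enters) shows the coinitiality $\theta_3$ of $(C,<_D)$ is $\le 2^\kappa$; one then fixes a $<_D$-decreasing coinitial sequence $\langle\bar\mu_\xi:\xi<\theta_3\rangle$ and does the stabilisation not on cuts of $I_i$ but on auxiliary functions $F^1_{\alpha,\beta},F^2_{\alpha,\beta}:\kappa\to\theta_3$ and sets $\Xi_{\alpha,\beta}\subseteq\kappa$, of which there are only $\theta_3^\kappa\le 2^\kappa<\theta_1$ many --- this is the counting that actually works. The stabilised data produces a catching $\bar K^*$ with $\bar\mu^*\le_D\bar\mu_\xi$ for all $\xi$, contradicting coinitiality. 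So the missing idea is precisely to pass from elements of $I$ (too many) to cardinality profiles of catching sequences (ordinal-valued, hence amenable to Erd\H{o}s--Rado and to a $2^\kappa$-bounded pigeonhole).
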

\begin{proof}
Let
$<^1_i=<_{I_i}$ ($i<\kappa$) and $<_1=<_I.$ Let $<^2_i$, for $i<\kappa,$ be a well-ordering of $I_i$ with a last element and let $<_2$
be such that $(I, <_2) = \prod_{i<\kappa} (I_i, <^2_i) / D;$ so $<_2$ is a linear ordering of $I$ with a last element.

We say a sequence $\bar{K}=\langle  K_i: i< \kappa      \rangle$
catches $(J^1, J^2)$ if each $K_i \subseteq I_i$ is non-empty and for every $s_1 \in J^1$ and $s_2 \in J^2$
there is $t \in \prod_{i<\kappa}K_i/D$ such that $s_1 \leq_1 t \leq_1 s_2.$ Set
\[
S=\{ \bar{K}: \bar{K} \text{~catches~} (J^1, J^2)  \},
\]
and
\[
C=\{\bar{\mu}= \langle \mu_i: i<\kappa \rangle: \text{~There exists~} \bar{K} \in S\text{~such that~}\bigwedge_{i<\kappa} |K_i|=\mu_i    \}.
\]
We can define an order on $C$ by
\[
\bar{\mu}^1=\langle \mu^1_i: i<\kappa \rangle <_D \bar{\mu}^2=\langle \mu^2_i: i<\kappa \rangle \iff \{i<\kappa: \mu^1_i < \mu^2_i                \} \in D.
\]
Now consider the following statement:
\\
$(*)$$\hspace{4.cm}$ \underline{There is $\bar{\mu} \in C$ which is $<_D$-minimal.}

We consider two cases.
\\
{\underline{\bf Case 1. $(*)$ holds:}} Fix $\bar{\mu}=\langle \mu_i: i<\kappa \rangle$ witnessing $(*)$, and let $\bar{K} \in S$
be such that for all $i<\kappa,~ |K_i|=\mu_i.$
Let $<^3_i$ be a well-ordering of $K_i$ of order type $\mu_i$
and let $<_3$ be such that $(K, <_3)=\prod_{i<\kappa} (K_i, <^3_i)/D,$ where $K=\prod_{i<\kappa}K_i /D \subseteq I.$
Let $\theta_3 =\cf(\prod_{i<\kappa}\mu_i / D)$
and let $g_\alpha \in \prod_{i<\kappa}K_i$, $\alpha < \theta_3$,
be such that $\langle   g_\alpha / D: \alpha < \theta_3     \rangle$
is $<_3$-increasing and cofinal in $(K, <_3)$. As $\theta_1 \neq \theta_2,$ for some $l\in \{1,2\},$ $\theta_3 \neq \theta_l.$
Assume without loss of generality that $\theta_3 \neq \theta_1$.

For $\alpha < \theta_3$ and $i<\kappa$ set
\[
K_{\alpha, i}=\{ s \in K_i: s <^3_i g_\alpha(i)               \}.
\]
and
\[
K_\alpha = K \upharpoonright \{ s: s <_3 g_\alpha /D            \} = \prod_{i<\kappa} K_{\alpha, i}/D.
\]
Then the sequence $\langle   K_\alpha: \alpha < \theta_3   \rangle$ is $\subseteq$-increasing and $K=\bigcup_{\alpha < \theta_3}K_\alpha.$
The next claim is evident from our construction.
\begin{claim}
$K$ is internal.
\end{claim}
By our choice of $\bar{\mu},$ the sequence $ \langle  K_{\alpha, i}: i<\kappa    \rangle$
does not catch $(J^1, J^2)$,
and hence we can find
$s_\alpha \in J^1$ and $t_\alpha \in J^2$ such that
\[
K_{\alpha} \cap \{s \in I: s_\alpha <_1 s <_1 t_\alpha       \} =\emptyset.
\]
As $\theta_3 \neq \theta_1,$ there is $s_* \in J^1$
such that
\[
\sup\{\alpha < \theta_3: s_\alpha \leq_1 s_*         \} =\theta_3.
\]
It follows that $K \cap \{s \in J^1: s_* \leq_1 s   \}=\emptyset.$
As $K$ catches $(J^1, J^2),$ it follows that $J^2 \cap K$ is $<_1$-cofinal
in $J^2$ from below, and since $K$ is internal, the arguments of section 2 show that
$J^2$ is also internal, and hence by Remark 4.2, $(J^1, J^2)$ is an internal cut of $I$, as required.
\\
{\underline{\bf Case 2. $(*)$ fails:}} Clearly $<_D$ is a linear order on $C$, so it has a co-initiality, call it $\theta_3$.
As $(*)$ fails, $<_D$ is not well-founded and so $\theta_3 \geq \aleph_0$.

\begin{claim}
$\theta_3 \leq 2^\kappa.$
\end{claim}
\begin{proof}
Suppose not. Let $\langle \bar \mu_\xi: \xi < (2^\kappa)^+         \rangle$ be a $<_D$-decreasing chain of elements of $C$. Define a partition
$F: [(2^\kappa)^+]^2 \to \kappa$ by
\[
F(\xi, \zeta) = \min\{ i< \kappa: \mu^\zeta_i < \mu^\xi_i            \},
\]
which is well-defined as $\{ i< \kappa: \mu^\zeta_i < \mu^\xi_i            \} \in D,$ in particular it is non-empty.
By the Erd\"{o}s-Rado partition theorem, there are $X \subseteq (2^\kappa)^+$ of size $\kappa^+$ and some fixed $i_* < \kappa$ such that for all
$\xi < \zeta$ in $X$, $F(\xi, \zeta)=i_*.$ Thus
\[
\xi < \zeta \in X \implies \mu^\zeta_{i_*} < \mu^\xi_{i_*},
\]
which is impossible.
\end{proof}

Let $\langle  \bar{\mu}_\xi=\langle \mu_{\xi, i}: i<\kappa \rangle: \xi < \theta_3       \rangle$
be $<_D$-decreasing which is unbounded from below in $(C, <_D)$. For $\xi < \theta_3$ choose $\bar{K}_\xi = \langle K_{\xi, i}: i<\kappa     \rangle \in S$
such that for all $i<\kappa, ~ |K_{\xi, i}|=\mu_{\xi, i}.$
Let $K_\xi=\prod_{i<\kappa} K_{\xi,i}/D \subseteq I.$ We consider two subcases.
\\
{\underline{\bf Subcase 2.1. For some $\xi < \theta_3$, $~K_\xi \cap J^1$ is bounded in $(J^1, <_1)$:}}
Fix such a $\xi < \theta_3$ and let $s_* \in J^1$
be a bound. Then as $\bar{K}_\xi$ catches $(J^1, J^2)$, it follows that $K_\xi \cap J^2$
is unbounded in $J^2$ from below.  Since $K_\xi$ is internal and  $K_\xi \cap J^2$
is unbounded in $J^2$ from below, so  $J^2$ is internal. In follows that the cut $(J^1, J^2)$
is internal and we are done.
\\
{\underline{\bf Subcase 2.2. For all $\xi < \theta_3$, $~K_\xi \cap J^1$ is unbounded in $(J^1, <_1)$:}}
Since $\cf(J^1, <_1)=\theta_1$, there are functions $f_\alpha \in \prod_{i<\kappa}I_i,$ for $\alpha < \theta_1,$
such that
\begin{enumerate}
\item $\forall \alpha < \theta_1, f_\alpha /D \in J^1,$
\item  $\langle f_\alpha/D: \alpha < \theta_1     \rangle$ is $<_1$-increasing,
\item $\langle f_\alpha/D: \alpha < \theta_1     \rangle$ is a $<_1$-cofinal subset of $J^1$.
\end{enumerate}

For every $\alpha <\theta_1$ and $\xi < \theta_3$ there are $\beta$ and $g$ such that
\begin{enumerate}
\item [(4)] $\alpha < \beta < \theta_1,$

\item [(5)] $g \in \prod_{i< \kappa} K_{\xi, i}$,

\item [(6)] $f_\alpha / D <_1 g/D <_1 f_\beta/D.$

\end{enumerate}
For $\alpha <\beta < \theta_1$ set
\[
\Lambda_{\alpha, \beta} = \{   (\xi, i): f_\alpha(i) \leq^1_i f_\beta(i) \text{~and there is~} s \in K_{\xi, i} \text{~such that~} f_\alpha(i) \leq^1_i s \leq^1_i f_\beta(i)       \}.
\]
For $i<\kappa$ set $\Lambda_{\alpha, \beta, i}=\{ \xi < \theta_3: (\xi, i) \in \Lambda_{\alpha, \beta}         \}$
and  $\Xi_{\alpha, \beta} = \{ i< \kappa: \Lambda_{\alpha, \beta, i} \neq \emptyset    \}$.
Also let $F^1_{\alpha, \beta}, F^2_{\alpha, \beta}$ be functions with domain $\kappa$
such that
\begin{itemize}
\item If $i \in \Xi_{\alpha, \beta},$ then
\[
F^1_{\alpha, \beta}(i) = \min\{  \mu_{\xi, i}: (\xi, i) \in \Lambda_{\alpha, \beta}      \},
\]
and
\[
F^2_{\alpha, \beta}(i) = \min\{ \xi: \mu_{\xi, i} =F^1_{\alpha, \beta}(i)     \}.
\]

\item If $i \in \kappa \setminus \Xi_{\alpha, \beta},$ then $F^1_{\alpha, \beta}(i)= F^2_{\alpha, \beta}(i)=0.$

\end{itemize}
\begin{claim}
For each $\alpha < \theta_1$ there exist $A_\alpha \subseteq (\alpha, \theta_1),$ functions $F^1_\alpha, F^2_\alpha$
and a set $\Xi_\alpha$ such that
\begin{enumerate}
\item $A_\alpha = \{ \beta \in (\alpha, \theta_1): F^1_{\alpha, \beta}=F^1_\alpha, F^2_{\alpha, \beta}=F^2_\alpha$ and $\Xi_{\alpha, \beta}=\Xi_\alpha     \}$.

\item $\sup(A_\alpha)=\theta_1.$
\end{enumerate}
\end{claim}
\begin{proof}
As $\theta_3 \leq 2^\kappa,$ we have
\begin{center}
 $|\{(F^1_{\alpha, \beta}, F^2_{\alpha, \beta}, \Xi_{\alpha, \beta}): \alpha < \beta < \theta_1  \}| \leq \theta_3^\kappa =2^\kappa < \theta_1$.
\end{center}
 So there is an unbounded subset $A_\alpha$ of $ \theta_1$ such that all tuples $(F^1_{\alpha, \beta}, F^2_{\alpha, \beta}, \Xi_{\alpha, \beta}), \beta \in A_\alpha$,
 are the same. The result follows immediately.
\end{proof}
 The next claim can be proved in a similar way.
\begin{claim}
There are $A \subseteq \theta_1$, functions $F_1, F_2$ and a set $\Xi$ such that
\begin{enumerate}
\item $A = \{ \alpha < \theta_1: F^1_{\alpha}=F_1, F^2_{\alpha}=F_2$ and $\Xi_{\alpha}=\Xi    \}$.

\item $\sup(A)=\theta_1.$
\end{enumerate}
\end{claim}
Let $\bar{K}^*=\langle  K^*_i: i<\kappa          \rangle$
where $K^*_i=K_{F_2(i), i}$ and let $\bar{\mu}^*= \langle   \mu^*_i: i< \kappa     \rangle$
be defined by $\mu^*_i=|K^*_i|$. Note that
\[
\mu^*_i=|K^*_i| = |K_{F_2(i), i}|=\mu_{F_2(i), i}=F_1(i).
\]

\begin{claim}
For every $\xi < \theta_3,$  $~\bar{\mu}^* \leq_D \bar{\mu}_\xi$.
\end{claim}
\begin{proof}
Choose $\alpha \in A$ and $\beta \in A_\alpha.$ So we have $F_1=F^1_{\alpha, \beta}, F_2=F^2_{\alpha, \beta}$
and $\Xi=\Xi_{\alpha, \beta}.$
By the  construction, there is $t \in K_\xi$
such that $f_\alpha /D <_1 t <_1 f_\beta /D$. Let $t=g/D, $ where $g \in \prod_{i<\kappa}K_{\xi, i}.$ Then
\begin{center}
$f_\alpha(i) <^1_i g(i) <^1_i f_\beta(i) \Rightarrow (\xi, i) \in \Lambda_{\alpha, \beta}$
$\Rightarrow \mu^*_i= F_1(i)=F^1_{\alpha, \beta}(i) \leq \mu_{\xi, i}$.
\end{center}
So
\[
\{ i< \kappa: \mu^*_i \leq \mu_{\xi, i}    \} \supseteq \{ i< \kappa:   f_\alpha(i) <^1_i g(i) <^1_i f_\beta(i)   \} \in D,
\]
and the result follows.
\end{proof}
\begin{claim}
$\bar{\mu}^* \in C.$
\end{claim}
\begin{proof}
We show that $\bar{K}^*$ catches $(J^1, J^2)$, so that $\bar{K}^* \in S$ witnesses   $\bar{\mu}^* \in C.$
So let $s_1 \in J^1$ and $s_2 \in J^2.$ Pick $\alpha \in A$
such that $s_1 <_1 f_\alpha/D.$ Let $\beta \in A_\alpha.$
By our construction there is $g \in \prod_{i<\kappa} K^*_i$
such that $f_{\alpha}/D <_1 g/D <_1 f_{\beta} /D$ and hence
\[
s_1 <_1 f_{\alpha}/D <_1 g/D <_1 f_{\beta} /D <_1 s_2.
\]
The claim follows.
\end{proof}
But Claims 5.6 and 5.7 give us a contradiction to the choice of the sequence
$\langle  \bar{\mu}_\xi: \xi < \theta_3       \rangle$. This contradiction finishes the proof.
\end{proof}

School of Mathematics, Institute for Research in Fundamental Sciences (IPM), P.O. Box:
19395-5746, Tehran-Iran.

E-mail address: golshani.m@gmail.com

Einstein Institute of Mathematics, The Hebrew University of Jerusalem, Jerusalem,
91904, Israel, and Department of Mathematics, Rutgers University, New Brunswick, NJ
08854, USA.

E-mail address: shelah@math.huji.ac.il

\end{document}